\newcommand{\norm}[1]{\lVert#1\rVert}
\DeclareMathOperator{\Exp}{Exp}
\DeclareMathOperator{\TV}{TV}
\DeclareMathOperator{\tr}{tr}
\numberwithin{equation}{section}
\newtheorem{theorem}{Theorem}[section]
\newlength{\defbaselineskip}
\newcommand{\setlinespacing}[1]%
           {\setlength{\baselineskip}{#1 \defbaselineskip}}
\newcommand{\RR}{{\mathbb R}}
\newcommand{\ZZ}{{\mathbb Z}}
\newcommand{\NN}{{\mathbb N}}
\newcommand{\ra}{\rightarrow}
\newcommand{\beql}[1]{\begin{equation}\label{#1}}
\newcommand{\eeq}{\end{equation}}
\newcommand{\beqal}[1]{\begin{eqnarray}\label{#1}}
\newcommand{\eeqa}{\end{eqnarray}}
\newcommand{\beq}{\begin{displaymath}}
\newcommand{\eeqno}{\end{displaymath}}
\newcommand{\bali}[1]{\begin{align}\label{#1}}
\newcommand{\eali}{\begin{align}}
\newcommand{\balino}{\begin{align*}}
\newcommand{\ealino}{\begin{align*}}
\newcommand{\la}{\lambda}
\newcommand{\qandq}{\quad\mbox{and}\quad}
\newcommand{\qforq}{\quad\mbox{for}\quad}
\newcommand{\non}{\nonumber}
\newcommand{\baa}{\begin{eqnarray*}}
\newcommand{\eaa}{\end{eqnarray*}}
\def\ups{\upsilon}
\newcommand{\ttl}{\large
Stationary Distributions and Convergence for $M/M/1$ Queues   \\[3pt] 
  in Interactive Random Environment \\[3pt]}
\begin{document}

\theoremstyle{plain}
\newtheorem{thm}{Theorem}[section]
\newtheorem{lemma}[thm]{Lemma}
\newtheorem{prop}[thm]{Proposition}
\newtheorem{cor}[thm]{Corollary}

\theoremstyle{definition}
\newtheorem{defn}{Definition}
\newtheorem{asmp}{Assumption}[section]
\newtheorem{rmk}{Remark}[section]
\newtheorem{exm}{Example}[section]

\author{Guodong Pang}

\address{Harold and Inge Marcus Department of Industrial and Manufacturing Engineering, Pennsylvania State University, University Park, PA 16802}

\email{gup3@psu.edu}

\author{Andrey Sarantsev}

\address{Department of Mathematics and Statistics, University of Nevada, Reno, NV 89557}

\email{asarantsev@unr.edu}

\author{Yana Belopolskaya}

\address{Saint Petersburg State University of Architecture and Civil Engineering, and  Petersburg Department of Mathematical Institute of Russian Academy of Science}

\email{yana@yb1569.spb.edu}

\author{Yuri Suhov}

\address{Statistical Laboratory, University of Cambridge; Department of Mathematics, Pennsylvania State University, University Park, PA 16802}

\email{yms@statslab.cam.ac.uk; ims14@psu.edu}

\title[Queues in Interactive Random Environment]{\ttl}

\begin{abstract}
A Markovian single-server queue is studied in an interactive random environment. 
The arrival and service rates of the queue depend on the environment, while the transition dynamics of the random environment depends on the queue length. We consider in detail two types of Markov random environments: a pure jump process and a reflected jump-diffusion. In both cases, the joint dynamics is constructed so that the stationary distribution can be explicitly found in a simple form (weighted geometric). 
We also derive an explicit estimate for exponential rate of convergence to the stationary distribution via coupling.
\end{abstract}

\maketitle

\thispagestyle{empty}

\section{Introduction}

In this paper we propose a tractable modeling approach to studying queues in an interactive random environment, where the arrival and/or service rates are modulated by a Markov process and the dynamics of the environment also depends on the state of the queue. Such models may be used in the following setting: In a service system (for example, on-demand service platforms), the demand may be affected by the service quality as indicated by dynamic `ratings' which may be modeled by a Markov chain, while the ratings dynamics may depend on the congestion level in the system. 

For an $M/M/1$ queue in an interactive random environment, let $N(t)$ be the queue-length process (the number of customers in the system) and $Z(t)$ be the random process for the environment. 
The joint process $(N(t), Z(t))$ can be modeled as a continuous-time Markov process on $\NN\times\mathcal{Z}$ ($\mathcal{Z}$ representing the range of $Z(t)$), with a generator
\begin{equation}\label{L-gen-general}
\mathcal{L}f(n, z)  = \mathcal{M}_zf(n, z) + \mathcal{A}_n f(n, z),
\end{equation}
where $\mathcal{M}_z$ describes the queueing dynamics depending on the environment state $z$ and 
$\mathcal{A}_n$ describes the environment dynamics depending on the queueing state $n$. 
Specifically, given the arrival and service rates $\lambda(z)$ and $\mu(z)$, we can write
\begin{equation*}
\mathcal M_zf(n,z) = \lambda(z)(f(n+1,z) - f(n,z))  + 1_{\{n \ne 0\}}\mu(z)(f(n-1,z) - f(n,z)).
\end{equation*}
On the other hand, the generator $\mathcal{A}_n$ can be for a general Markov process, depending on the queue length $n$. For example, for a given $n$, $\mathcal{A}_n$ may represent the generator of a diffusion process, 
$$
\mathcal{A}_n f(n,z) =  b_n(z)\cdot\nabla_z f(n,z) + \frac{1}{2}\tr\big(\Sigma_n(z)\nabla^2_z f(n,z)\big)
$$
or a  continuous-time jump Markov chain with a transition rate matrix depending on the queueing state $n$.  In the utmost generality, one can impose mild conditions on the generators
$\mathcal{M}_z$ and $\mathcal{A}_n$ to guarantee the existence of an invariant measure for the joint process $(N(t), Z(t))$. 
However, it seems difficult to go beyond that without any structural assumptions on the joint generator, especially $\mathcal{A}_n$.  In many applications, it is convenient to have an explicit invariant measure to work with. 
In general, it is hard to find an explicit form for stationary distributions of multidimensional Markov processes. (For example, in \cite{Williams} it is shown that an obliquely reflected Brownian motion (RBM) in a polyhedral domain  in $\RR^d$ has a product-of-exponentials stationary distribution under the {\it skew symmetry} condition, the only case with an explicit stationary measure.)

Therefore, in order to provide an \emph{explicit} expression for the invariant measure of the joint process, we  study a particular \emph{multiplicative} (scaled) form in the generator component $\mathcal{A}_n$, that is, 
$$
\mathcal{A}_nf(n,z) = \beta_n \rho^{-n}(z) \mathcal{A}f(n,z) 
$$
Here $\beta_n$ is a positive constant, $\rho(z) = \lambda(z)/\mu(z)$ is the traffic intensity in the queue, and $\mathcal{A}f(n,z)$ is a generator corresponding to  a Markov process whose transition dynamics does not depend on $n$. 
(In the case of reflected processes, the boundary conditions should be treated carefully; see Section \ref{sec-diffusive} for details.)
 The scaling factors not only depend on the queue length $n$, but also include the traffic intensity $\rho(z)$. 
 For an environment state $z$, $\rho^{-n}(z) >1$ for all queue state $n$, but the factor $\beta_n$ gives more flexibility (slowing down or speeding up) to the scaling of the generator $\mathcal{A}$. 
Our approach is motivated by applications where the environment dynamics may be sped up or slowed down by the congestion.
For example, in on-demand service systems, the transitions among the different  service quality `ratings' may simultaneously change faster when many customers experience more congestion due to higher response rates. 

We discuss two types of random environment: a pure jump Markov chain taking values in a discrete state space $D$ (finite or countable), and a reflected (jump) diffusion in a piecewise smooth domain, also denoted by $D$.  
Each type of environment is of its own interest. 
Under certain assumptions, we prove the existence of the joint invariant measure, derive its explicit expression and establish the exponential rate of convergence to the steady state (in the total variation norm). 
The explicit expression of the invariant measure can be regarded as a weighted geometric form (or  
some ``product form'', although not exactly in the same sense as in the literature on stochastic networks \cite{Kelly-book,Textbook3,KY-book}). Specifically, we have the joint invariant measure for $(N(t), Z(t))$ of the form $\pi(\{n\}, \mathrm{d}z) = \Xi^{-1}\rho^n(z)\nu(\mathrm{d}z),
$ where $\Xi$ is some normalization constant, and $\nu(\cdot)$ is the invariant measure associated with the generator $\mathcal{A}$. Recall that the steady-state distribution of the $M/M/1$ queue itself given an environment state $z$ is geometric ($P(N(\infty)=n) = (1-\rho(z))\rho^n(z)$). The product of the terms ``$\rho^n(z)$'' and ``$\nu(\mathrm{d}z)$" mixes the invariant measures for the queue and the environment, despite $\rho(z)$ depending on $z$. Here the scaling factor $\rho^{-n}(z)$ in $\mathcal{A}_n$  is critical.  
For the two types of environment processes we are able to establish the exponential rate of convergence.

With a diffusive environment, our work introduces new stochastic models. 
The simple models include:
(a) an $M/M/1$ queue with an interactive diffusive arrival rate: the arrival rate is a one-dimensional reflected (jump) diffusion process in $[0,1]$ under a fixed service rate $1$;
(b) an $M/M/1$ queue with an interactive diffusive service rate: the service rate is a one-dimensional reflected (jump) diffusion process in $[1,\infty)$ under a fixed arrival rate $1$;  
(c) the arrival and service rates form a two-dimensional RBM in an open convex cone (with arrival rate strictly lower than service rate).  
RBMs have been extensively studied in the queueing (network) literature as scaling limits. However, RBMs as arrival and/or service rates have not been carefully studied. 
When there is no interactive behavior, the $M/M/1$ queue with a RBM arrival rate can be regarded as  a special case of queues of the so-called doubly stochastic Poisson arrival processes with the arrival rate being an independent stochastic process (see, e.g., \cite{AP, BHZ05, BHZ06}). Our first model extends such existing interesting studies to include feedback loop from queue to environment.  The second and third models with RBM being the service rate or the RBM in the wedge for both arrival and service rates are new, even in the setting of no interactive behavior. Such models are worth further careful investigation. 
Of course, our models go beyond RBMs, to general reflected (jump) diffusion models.

We aim to find the explicit rate of convergence to the stationary distribution in these models. 
For standard $M/M/1$ queues, it is well known that the rate of convergence is exponential,  see, e.g., \cite[Proposition 5.8]{Robert-book}. 
However, for diffusion processes (solutions of SDEs), reflected diffusions, and their versions with jumps, the characterization of an explicit rate of convergence to steady state (as opposed to simply proving that there exists an exponential rate of convergence) is quite a challenging problem. See, e.g., \cite{CL89, MyOwn10, MyOwn12,MyOwn16}. 
Thus, it is a much more difficult problem to study the rate of convergence for the joint Markov process with a generator in the general form in \eqref{L-gen-general} due to the complicated interactive behavior of the two processes (one being discrete and the other being continuous). 
 We attempt to solve this problem via a {\it coupling} technique for the joint process $(N,Z)$. 
 We provide a novel way to construct the coupling time for the joint process in order to prove that the convergence rate is exponential,  and more importantly, provide good estimates of the rate of convergence via careful studies of the exponential bounds for the coupling time. 
 This appears to be the first work in the literature to carefully find the estimates of the coupling times of joint processes   for queueing processes in random environments.

Although our main focus is on the multiplicative (scaling) form in the generator $\mathcal{A}_n$, we have also considered a setup where the the environment jump diffusion described above depend on the queue length $n$ via its domain $D_n \subseteq D$. In particular, the drift vector field, covariance matrix field, and the jump measure remain the same for all $n$, but reflection vector fields may depend on the queueing state $n$.  The entire domain $D$ is the union of these $D_n,\, n = 0, 1, 2, \ldots$. We assume that this reflected jump-diffusion in $D_n$ has a unique invariant probability measure $\nu^{(n)}_{D_n}$ inside the domain $D_n$, which is the projection of a certain finite measure on $D$ to $D_n$. (The corresponding boundary measures may depend on $n$.)  See Assumptions \ref{asmp:domains}--\ref{asmp:finite-new-version}. We prove similar results as above in this setting. We construct two special examples:  an $M/M/1$ queue with a fixed service rate and a reflected diffusion arrival rate, controlled based on a threshold of queue length (Example \ref{ex:2.1}) and an $M/M/1$ queue with a fixed arrival rate and a diffusive service rate, controlled similarly (Example \ref{ex:2.2}). 

When the random environment is a Markov chain taking discrete values, our results also extend to the generator $\mathcal{A}_n$ of the form $\rho^{-n}(z)\tau_n(z,z')$, where the generator rate $\tau_n$ may depend on the queueing state $n$ unlike the multiplicative case. However, it is assumed that an invariant measure associated with the transition rate $\tau_n(z,z')$ exists such that it is independent of the queueing state $n$ (Assumption \ref{as-MM1-T}). This is slightly more general than the multiplicative case, so we state the model and results in section \ref{sec-jump} in this setup. We also give an example to illustrate how this slightly more general setup is used (see Examples \ref{ex-discrete-1} and  \ref{ex-discrete-2}). 

\subsection{Literature review on queues in interactive environments}

Queues in random environments (e.g., Markov modulated models) have been extensively studied in the literature. 
Most of the literature assumes that the queueing dynamics is affected by the environment, but not interactive. 
For example, the paper \cite{RS} studies Markov-modulated arrival and service rates with finite environment space, and finds expressions of waiting times. The paper \cite{Takine} deals with similar questions by comparing this queue with an appropriate $M/M/1$ queue. Optimization of service rate for the case when arrival rate is a Markov process is studied in \cite{KLT}. See also, a birth--death process in random environment \cite{CT81} and a Markov chain in Markov environment, studied in \cite{Cogburn, Economou, Matrix}. A particular case of a Markov--modulated setting is when the service dynamics is subject to interruptions. In this case, the random environment only affects service rate $\mu$. The survey \cite{KPC} summarizes the existing literature on this topic. 

In the Markov-modulated queueing literature, the arrival or service rates under modulation take finite or countable number of values. However, in practice, the rates under modulation can possibly take continuous values. Our work thus goes beyond the existing frameworks and develops new queueing models.

 In \cite{GPSY}, the authors study a random particle (a distinguished customer) walking randomly over the sites of a symmetric Jackson network (open or closed), where the arrival rate of a station/node or the transition of customers from it to other stations/nodes is affected if the particle occupies it, while the jump rate of the particle depends on the state of the station/node it currently occupies. An explicit steady state distribution for the joint process is derived. 
In \cite{KDO}, Jackson networks in interactive random environments are studied, where the service capacities are affected by the environment, while customer departing may enforce the environment to jump immediately. An explicit expression of the product form is derived for the joint queueing and environment processes.  Inspired by \cite{GPSY}, a different construction of Markov processes in random environments resulting in product-form invariant measure is provided. In \cite{BS}, various Markov processes with interactive random environment are constructed. This paper is in the same flavor as that in \cite{BS}. 
The paper \cite{Falin} deals with feedback loop created by blocking some channels in a multi-server queue, and finds a product-form stationary distribution for the joint process. 
None of these papers investigate the rate of convergence to stationarity. Our model of a single-server queue is also constructed in a more general manner.

The papers \cite{Cornez, Yechiali} study birth-death processes in random environment with feedback. This is a more general setup than in our paper, because an $M/M/1$ queue is a particular case of a birth-death process. However, \cite{Cornez} is concerned with explosion questions, rather than stationary distributions and convergence rates, and \cite{Yechiali} focuses on generating function approach and achieves only partial results for the steady-state distribution.

\subsection{Notation} The integral with respect to the measure $\nu$ applied to the function $f$ is written as $\langle\nu, f\rangle$.  Exponential distribution with rate $\alpha$ is denoted by $\Exp(\alpha)$. The arrow $\Rightarrow$ indicates weak convergence. The dot product of two vectors $a$ and $b$ is denoted by $a\cdot b$. We say two finite measures $\mu, \nu$ on $\mathbb R$ satisfy $\mu \preceq \nu$ if for all $u \in \mathbb R$ we have $\mu(-\infty, u] \le \nu(-\infty, u]$, but $\mu(\mathbb R) = \nu(\mathbb R)$. We say that $\mu$ is {\it stochastically dominated by} $\nu$. We transfer this concept to random variables: $X$ is stochastically dominated by $Y$ if the distribution of $X$ is stochastically dominated by the distribution of $Y$. Let $\ZZ_+ = \{0, 1, 2, \ldots\}$ and $\RR_+ := [0, \infty)$. Define the {\it total variation norm:} For a signed measure $\mathbf{\nu}$, let $\norm{\mathbf{\nu}}_{\TV}:= \sup_{A}|\mathbf{\nu}(A)|$.  Throughout this article, we consider continuous-time random processes (unless otherwise noted) on a filtered probability space $(\Omega, \mathcal F, (\mathcal F_t)_{t \ge 0}, \mathbb P)$ with the filtration satisfying the usual conditions. 

\subsection{Organization of the paper}  In Section \ref{sec-jump}, we study the model  in an interactive jump environment.  In Section \ref{sec-diffusive}, we study  the single-server queue with a reflected jump diffusion environment. In Section \ref{sec-rate}, we estimate the explicit rate of exponential convergence for the case of compact environment state space, for both models in  Sections  \ref{sec-jump} and \ref{sec-diffusive}. In Section \ref{sec-appendix} we state and prove some auxiliary lemmata. We make some concluding remarks in Section \ref{sec-conclusion}.

\section{$M/M/1$ queue in an interactive jump environment} \label{sec-jump}

Consider an $M/M/1$ queue with an infinite waiting space operating in an interactive jump environment described as follows. Let  $D$ be a finite or countable state space. For every $n \in \mathbb Z_+$, let $\mathbf{T}_n = (\tau_n(z, z'))_{z, z \in D}$ be the generator of an irreducible continuous-time Markov chain on $D$; this (finite or countable-sized) matrix is called {\it nominal jump intensity matrix} for the jump process $Z$ in the queueing state $n$. We define a two-component Markov process  $(N,Z)$ taking values in the countable state space $\ZZ_+ \times D$ with the following generator matrix $\mathbf{R}= \big( R[(n,z),(n',z')] \big)$:
\begin{align} 
\label{Rmatrix-MM1}
\begin{split}
R[(n,z),(n+1, z)] &=  \la(z),  \quad R[(n,z),(n-1, z)] =  \mu(z),   \\
R[(n,z),(n, z')] &=  \rho^{-n}(z) \tau_n(z,z'),  \quad R[(n,z),(n', z')] = 0,\quad n \ne n',\, z \ne z'.
\end{split}
\end{align}
where $\rho(z):= \la(z)/\mu(z)$ for each $z \in D$. 
Here $N=\{N(t): t\ge 0\}$ represents the number of jobs in the system (including those in queue and in service), taking values in $\ZZ_+$, and $Z= \{Z(t): t \ge 0\}$ represents a jump process taking values in  $D$. 
When the environment is in state $z$, the arrival and service rates for the queueing process are $ \la(z)$ and $ \mu(z)$, respectively, both depending on state $z$. 

When the queue size is in state $n$, the transition of the environment $Z$ from state $z$ to state $z'$ occurs at the rate $ \rho^{-n}(z) \tau_n(z,z')$. Note that the fourth equation in~\eqref{Rmatrix-MM1} does not allow simultaneous jumps for $N$ and $Z$. It is evident that the pair $(N,Z)$ is a well-defined Markov process in $\ZZ_+\times D$ with the generator $\mathbf{R}$.

\begin{rmk} 
We do not multiply this transition rate $\tau_n$ by a factor $\beta_n$: Dependence on $n$ is already enshrined in the rate $\tau_n$. 
We impose a condition~\eqref{as-MM1-T2} to guarantee the product form of the steady state. 
\end{rmk}

We first make the following assumption on the 
nominal jump intensity matrix $\mathbf{T}_n$. 

\begin{asmp}  \label{as-MM1-T}
For each $n\in \ZZ_+,\, z \in D$, and for some function $v: D \ra \RR_+$,
\beql{as-MM1-T2}
v(z)\sum_{z'\in D} \tau_n(z,z') = \sum_{z'\in D} v(z') \tau_n(z',z). 
\eeq
\end{asmp} 

For fixed $n\in \ZZ_+$, if we define a Markov process $\tilde{Z}_n :=\{\tilde{Z}_n(t): t\ge 0\}$ on $D$ with the nominal jump intensity matrix $\mathbf{T}_n$ as the generator, then~\eqref{as-MM1-T2} implies that $v(\cdot)$ defines an invariant measure for $\tilde{Z}_n$. If $\sum_{z\in D} v(z)< \infty$, then this measure can be normalized to a probability distribution. If 
\beql{}
\sum_{z'\in D} \tau_n(z,z') = \sum_{z'\in D} \tau_n(z',z),  \non
\eeq
then the counting measure is invariant for $\tilde{Z}_n$; if $D$ is a finite set, then it is normalized to a uniform distribution on $D$. It is important to note that the invariant measure $v(\cdot)$ does not depend on $n$, although the jump intensity matrix $\mathbf{T}_n$ depends on $n$.

\begin{rmk}
A simplest example is when $\tau_n(z,z')$ has a multiplicative form: 
$$
\tau_n(z,z') = \beta_n \tau(z,z')
$$
for some transition rate matrix $\tau(z,z')$ satisfying $v(z)\sum_{z'\in D} \tau(z,z') = \sum_{z'\in D} v(z') \tau(z',z)$. However, we provide  examples below in which $\tau_n(z,z')$ depends on $n$ in a nontrivial manner while the existence of $v$ independent of $n$ is guaranteed. See Examples  \ref{ex-discrete-1} and  \ref{ex-discrete-2}. 
 \end{rmk}

\begin{asmp}  \label{as-MM1}
The functions $ \rho, v$ satisfy
\beql{as-MM1-2}
\rho(z) <1  \qforq z \in D,
\eeq
\beql{as-MM1-3}
\Xi:= \sum_{z\in D}  \frac{v(z)}{1-\rho(z)} = \sum_{n=0}^{\infty} \sum_{z\in D} \rho^n(z) v(z) < \infty.
\eeq
\end{asmp} 
Note that the constant $\Xi$ is the normalization constant in the joint invariant measure $\pi$ in \eqref{MM1-pi}.

\begin{theorem} \label{thm-MM1}
Under Assumptions \ref{as-MM1-T} and \ref{as-MM1}, the Markov process $(N,Z)$ is irreducible, aperiodic, and positive recurrent. It  has
an invariant probability measure
\beql{MM1-pi}
\pi(n,z) := \eta(n,z)/\Xi,  \quad \forall (n,z) \in \ZZ_+\times D,
\eeq
where $\Xi$ is given in \eqref{as-MM1-3}, and 
\beql{MM1-eta}
\eta(n,z) :=  \rho^n(z) v(z), \quad \forall (n,z) \in \ZZ_+\times D,
\eeq
This process has transition kernel $P_t(x, \cdot) $ which converges to this invariant measure:
\begin{equation}
\label{eq:ergodic-discrete}
\norm{P_t(x, \cdot) - \pi(\cdot)}_{\TV} \to 0\ \mbox{as}\ t \to \infty,\ \mbox{for all}\ x \in \ZZ_+\times D.
\end{equation}
\end{theorem}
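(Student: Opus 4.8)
The plan is to verify the three structural claims (irreducibility/aperiodicity, positive recurrence with the stated invariant measure, and convergence in total variation) essentially by direct computation and then by appeal to standard Markov chain ergodic theory. First I would check that $\eta(n,z)=\rho^n(z)v(z)$ is invariant for the generator $\mathbf R$ by verifying the global balance equations $\sum_{(n,z)}\eta(n,z)R[(n,z),(n',z')]=0$ for every target state $(n',z')$. This splits into two independent pieces because simultaneous jumps are forbidden: the ``queue'' part, where one uses the classical $M/M/1$ detailed-balance identity $\rho^n(z)v(z)\lambda(z)=\rho^{n+1}(z)v(z)\mu(z)$ (which holds since $\rho(z)=\lambda(z)/\mu(z)$), and the ``environment'' part, where the factor $\rho^{-n}(z)$ in the rate $R[(n,z),(n,z')]=\rho^{-n}(z)\tau_n(z,z')$ is exactly what is needed: the net environment flux into $(n,z')$ is $\sum_{z}\eta(n,z)\rho^{-n}(z)\tau_n(z,z')-\eta(n,z')\rho^{-n}(z')\sum_{z}\tau_n(z',z) = v(z')\bigl(\sum_z v(z)v(z')^{-1}\tau_n(z,z') - \sum_z\tau_n(z',z)\bigr)$, wait—more cleanly, the cancellation of $\rho^{-n}$ against $\rho^{n}$ reduces the environment balance at level $n$ precisely to Assumption~\ref{as-MM1-T}, equation~\eqref{as-MM1-T2}, summed appropriately. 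So $\eta$ is invariant, and Assumption~\ref{as-MM1} (specifically $\Xi<\infty$ from \eqref{as-MM1-3}) lets us normalize it to the probability measure $\pi$ in \eqref{MM1-pi}.

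Next, irreducibility: since each $\mathbf T_n$ is the generator of an irreducible chain on $D$ and $\rho(z)^{-n}>0$, all environment transitions have strictly positive rates; combined with the strictly positive birth rates $\lambda(z)$ and death rates $\mu(z)$ (positive because $\rho(z)=\lambda(z)/\mu(z)\in(0,1)$ forces both finite and nonzero), every state in $\ZZ_+\times D$ communicates with every other, so the process is irreducible. Aperiodicity is automatic for a continuous-time Markov chain with a stable, conservative, irreducible generator (the transition semigroup $P_t$ is strictly positive on the whole chain for every $t>0$). Having exhibited a genuine invariant \emph{probability} measure for an irreducible chain, positive recurrence follows from the standard dichotomy for irreducible continuous-time Markov chains (an irreducible chain admitting an invariant probability measure is positive recurrent, and the invariant probability measure is then unique).

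Finally, \eqref{eq:ergodic-discrete} is the convergence theorem for positive recurrent, irreducible, aperiodic continuous-time Markov chains on a countable state space: $\norm{P_t(x,\cdot)-\pi}_{\TV}\to 0$ for every initial state $x$. One can cite the standard reference (e.g.\ Norris or Asmussen) or give the one-line coupling argument: run two independent copies, one from $x$ and one from $\pi$, on the common state space; by irreducibility and positive recurrence the product chain hits the diagonal almost surely in finite time, and after the coupling time the marginals agree, so $\norm{P_t(x,\cdot)-\pi}_{\TV}$ is bounded by the probability the coupling has not yet occurred, which tends to $0$. (The quantitative, exponential version of this is deferred to Section~\ref{sec-rate}; here only the qualitative statement is needed.)

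The only place that requires genuine care rather than bookkeeping is the invariance computation, and within it the environment balance: one must make sure the countable sums are handled correctly (Assumption~\ref{as-MM1-T} is stated pointwise in $z$, and \eqref{as-MM1-3} guarantees $\pi$ is summable, which also justifies interchanging the order of summation when checking global balance) and that the $\rho^{-n}(z)$ factor cancels cleanly against the $\rho^n(z)$ in $\eta$ so that what remains is exactly \eqref{as-MM1-T2}. Everything else—irreducibility, aperiodicity, the passage from ``invariant probability measure exists'' to ``positive recurrent and ergodic''—is standard theory for countable-state chains and can be invoked directly.
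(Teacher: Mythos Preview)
Your proposal is correct and follows essentially the same route as the paper: verify the global balance equations for $\eta(n,z)=\rho^n(z)v(z)$ by splitting into the queue part (where the $M/M/1$ detailed balance $\rho^n\lambda=\rho^{n+1}\mu$ does the work) and the environment part (where the $\rho^{-n}(z)$ in the rate cancels the $\rho^n(z)$ in $\eta$, reducing the balance to Assumption~\ref{as-MM1-T}); then use $\Xi<\infty$ to normalize, and finish by invoking the standard irreducible/positive-recurrent/ergodic theory for countable-state continuous-time chains. The paper's proof is exactly this, with the balance computation written out explicitly and citations to Norris and Meyn--Tweedie for the structural conclusions.
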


\medskip 
\noindent\emph{Proof.} 
We first show that the process $(N,Z)$ is irreducible and aperiodic. It follows from the observation that for every $t > 0$, $(n, z), (n', z') \in \mathbb Z_{+}\times D$, one can with positive probability get from $(n, z)$ to $(n', z')$ in time $t$. For the measure $\eta$ from~\eqref{MM1-eta} to be finite, we need
$$
\sum_{(n,z)} \eta(n,z) = \sum_{(n,z)} \rho^n(z) v(z) = \sum_z \frac{v(z)}{1-\rho(z)} < \infty, 
$$
which is implied by \eqref{as-MM1-2}--\eqref{as-MM1-3} in Assumption \ref{as-MM1}. If we  prove that $\eta$ from~\eqref{MM1-eta} is indeed an invariant measure, the positive recurrent  property follows from \cite[Theorem 3.5.3]{Norris}, \cite[Theorem 2.7.18]{SK}, and then the ergodicity, as in~\eqref{eq:ergodic-discrete}, follows from \cite{MT1993a}. To verify that $\eta(n,z)$ in \eqref{MM1-eta} is an invariant measure, we show that $\eta' \mathbf{R} =0$. Let us show that for all $n = 1, 2, \ldots$ and $z \in D$, 
\begin{align} 
\label{MM1-p}
\begin{split}
-  \eta(n,z) R[(n,z),(n,z)] & =  \eta(n-1,z) R[(n-1,z),(n,z)] + \eta(n+1,z) R[(n+1,z),(n,z)]\\  & \qquad \qquad  \qquad \qquad+ \sum\nolimits_{z'\neq z} \eta(n,z') R[(n,z'),(n,z)], \\ - \eta(0,z) R[(0,z),(0,z)] & =   \eta(1,z) R[(1,z),(0,z)] + \sum\nolimits_{z'\neq z} \eta(0,z') R[(0,z'),(0,z)].  
\end{split}
\end{align}
By \eqref{Rmatrix-MM1}, the left- and right-hand sides of the first equation in \eqref{MM1-p} are equal to, respectively,
\begin{align*}
\eta(n,z)& \sum_{(n',z')\neq (n,z)} R[(n',z),(n',z)]  \\
& = \rho^n(z) v(z) \bigg(R[(n,z),(n+1,z)] + R[(n,z),(n-1,z)] + \sum_{z'\neq z} R[(n,z),(n,z')]  \bigg)  \\
& = \rho^n(z) v(z) \bigg(  \la(z) +  \mu(z) + \sum_{z'\neq z} \rho^{-n}(z)\tau_n(z,z') \bigg) \\
& =\rho^n(z)v(z)(\la(z) + \mu(z)) + v(z) \sum_{z'\neq z} \tau_n(z,z');\\
\rho^{n-1}(z) &v(z) \la(z) + \rho^{n+1}(z) v(z)  \mu(z) + \sum_{z'\neq z} \rho^{n}(z') v(z') \rho^{-n}(z') \tau_n(z',z) \\
& = \la(z) v(z) \rho^n(z)(\la(z) + \mu(z)) + \sum_{z'\neq z} v(z')\tau_n(z',z). 
\end{align*}
From~\eqref{as-MM1-T2} in Assumption \ref{as-MM1-T}, the last terms in the right-hand side of these two last equations are equal. This proves the first equation in~\eqref{MM1-p}; the second one is similar. This completes the proof.   

\medskip

\begin{exm}  \label{ex-discrete-1}
\emph{($D$ as a union of  finite sets)} In Examples~\ref{ex-discrete-1} and~\ref{ex-discrete-2}, $\delta (i,j)$ stands for the
Kronecker delta.  
Given $n\in{\mathbb Z}_+$, let $D_n$ be a finite set in $(0,1)$ with cardinality $m_n$. 
For definiteness, assume that $1<m_n<M$ where $M\in{\mathbb Z}_+$ is a fixed value. 
Introduce an enumeration of points in each $D_n$: $D_n=\{z(1),\ldots ,z(m_n)\}$
(say, in a increasing order) and make a convention that $z(0)=z(m_n)$, $z(m_n+1)=z(1)$.
Sets $D_n$ can have common points for different $n$ or be pair-wise disjoint.  Set 
$D=\operatornamewithlimits{\cup}\limits_{n}D_n$ and $\upsilon (z)=1$ for $z\in D$. Set $D$ can be 
finite or countable.

Next, take a subset ${\mathbb L}\subseteq{\mathbb Z}_+$ (${\mathbb L}$ or 
${\mathbb Z}_+\setminus{\mathbb L}$ can be empty). For $n\in\mathbb L$, set
$$
\tau_n(z,z') = \frac{\beta_n}{m_n-1}, \quad \forall\;z,z'\in D_n\;\hbox{ with }\;z \neq z'.
$$
For $n\in{\mathbb Z}_+\setminus{\mathbb L}$, set
$$ \tau_n(z(i),z(j)) = \frac{1}{2}\delta (j,i\pm 1),\quad\forall\;i,j\in\{1,\ldots ,m_n\}.$$
Here $\beta_n\in (0,\infty)$ are scaling constants depending on $n$ (which is irrelevant for the invariant 
measure of the process $(N,Z)$). Pictorially, $\tau_n$ for $n\in\mathbb L$ describes uniform jumps
on $D_n$ while for $n\in{\mathbb Z}_+\setminus{\mathbb L}$, $\tau_n$ yields a `nearest-neighbor' 
walk with cyclic (periodic) boundary condition.
Either way, the counting measure $\upsilon$ is invariant; cf. Assumption~\ref{as-MM1-T}. Thus, \eqref{as-MM1-T2} holds true.

Then $\mathbf{T}_n= \big(\tau_n(z,z')  \big)$ generates a Markov chain $\tilde{Z}_n$ with an invariant 
probability measure ${\mathbf 1}_{D_n}(z)/m_n$,  $z \in D$.  The invariant measure 
$\eta$ is then given  in \eqref{MM1-eta} with $\eta (n,z)=z^n$. 
\end{exm}

\begin{exm}  \label{ex-discrete-2}
\emph{($D$ as a countable set, $\tau_n$ as a null-recurrent jump chain.)}
Assume that  $D \subset (0,1)$ is countable, and can be enumerated by $i=0,\pm 1,\pm 2$, 
so that $\rho_i:= \rho_{z_i}$ for $i\geq 0$ satisfies $\rho_0 < \rho_1 < \cdots < 1$ and 
$\lim_{i \rightarrow \infty} \rho_i =1$. (Enumeration with labels $i=-1,-2,\ldots$
does not matter.) Set $\ups (z)=1$ and 
$$
\tau_n(z_0,z_j) = \tau_n(z_i, z_{i+j})  = \beta_n\delta (j,n), \quad \forall i,j\in{\mathbb Z}.
$$
Here, as earlier, $\beta_n$ is a scaling constant depending on $n$ (again irrelevant for the invariant 
measure of the process $(N,Z)$). 
Then  $\mathbf{T}_n= \big(\tau_n(z,z')  \big)$ generates a null-recurrent Markov chain $\tilde{Z}_n$ with  
the invariant measure $\upsilon (z)= 1$, $z \in D$. 
Thus, the random traffic intensity $\rho_{\tilde{Z}_n}$, depending on both the state of the queue and the environment, will approach the critical value $1$ infinitely often. However, under the condition \eqref{as-MM1-3} the resulting Markov process $(N,Z)$ is positive recurrent, with an invariant measure $\eta(n,z) = \rho^n(z)$ for $(n,z) \in \ZZ_+\times D$.  
\end{exm}

\medskip

\section{$M/M/1$ queue in an interactive diffusive environment} 
\label{sec-diffusive}

\subsection{Reflected jump-diffusions} In this section we consider the queue with $\lambda$ and $\mu$ dependent on a diffusive environment process $Z(t)$. 
First, let us define the dynamics of this environment process as a reflected (jump) diffusion in a certain domain in $\mathbb R^d$.

It is instrumental to recapitulate some basic notion. 
A {\it domain} in $\RR^d$ is the closure of an open connected subset. A domain $D$ is called {\it smooth} if its boundary $\partial D$ is a $(d-1)$-dimensional $C^2$ manifold. Take $m$ smooth domains $D_1, \ldots, D_m$ in $\RR^d$. Assume  $D = \cap_{i=1}^m D_i$ has boundary $\partial D$ with $m$ {\it faces:} $F_i := \partial D\cap\partial D_i$ which are $(d-1)$-dimensional manifolds with an edge, and such that all $m$ domains are essential: Removal from the intersection of any domain will change the result. Then $D$ is called a {\it piecewise smooth domain} in $\RR^d$. Define by $\mathbf{n}_i(z)$ the inward unit normal vector to $\partial D_i$ at $z \in F_i$. {\it Inward} in this case is defined as pointing inside $D_i$, even if this is not inside $D$. An important example is a {\it convex polyhedron} with $D_i$ being half-spaces. Of particular interest is the positive orthant $D = \RR^d_+$. Of course, smooth domains also belong to this class of domains, with $m = 1$. 

Take continuous functions $g : D \to \mathbb R^d$ and $\Sigma : D \to \mathbb R^{d\times d}$ such that the matrix $\Sigma(z) = (a_{ij}(z))$ is symmetric and positive definite for all $z \in D$, and there exists a $\delta > 0$ such that $\Sigma(z)v\cdot v \ge \delta\norm{v}^2$ for all $v \in \mathbb R^d$ and $z \in D$. For every $z \in D$, define a finite measure $\varpi(z, \cdot)$ on $D$ such that $\varpi(z, \cdot) \Rightarrow \varpi(z^0, \cdot)$  as $z \to z^0$ in $D$. Recall that $\Rightarrow$ denotes weak convergence.  Take $r_i : F_i \to \mathbb R^d$: continuous functions, pointing inside $D$; that is, $r_i(z)\cdot\mathbf{n}_i(z) > 0$ for $i = 1, \ldots, m,\, z \in F_i$. Let us define a {\it reflected jump-diffusion}: a process $Z = \{Z(t):\, t \ge 0\}$ in $D$ with {\it drift vector field} $g$, {\it diffusion matrix field} $\Sigma$, {\it jump measures} $\varpi(z, \cdot)$ and {\it reflection vector fields} $r_1, \ldots, r_m$. 

This process will be adapted and right-continuous with left limits. Take a $d$-dimensional Brownian motion $B = \{B(t):\, t \ge 0\}$, adapted to the filtration. Take continuous nondecreasing processes $\ell_i = \{\ell_i(t):\, t \ge 0\}$ for $i = 1, \ldots, m$ such that $\ell_i$ can grow only when $Z(t) \in F_i$, another right-continuous process with left limits $Z = (Z(t),\, t \ge 0)$ with values in $D$,  and yet another process $\mathcal{N} = \{\mathcal{N}(t): t\ge 0\}$ which is right-continuous piecewise constant, with jump measure $\varpi(Z(t-), \cdot)$, and such that 
\begin{equation}\label{E-processZ}
\mathrm{d} Z(t) = g(Z(t))\,\mathrm{d} t + \Sigma^{1/2}(Z(t))\,\mathrm{d}B(t) + \mathcal N(t) + \sum\limits_{i=1}^mr_i(Z(t))\,\mathrm{d}\ell_i(t),\,\quad t \ge 0.
\end{equation}
We assume the equation~\eqref{E-processZ} has a well-defined unique weak solution, and forms a Feller continuous strong Markov semi-group, with generator 
\begin{equation} \label{calA-def}
\mathcal A f(z) = g(z)\cdot\nabla f(z) + \frac12\tr(\Sigma(z)\nabla^2f(z)) + \int_D(f(z') - f(z))\varpi(z, \mathrm{d} z'),
\end{equation}
which consists of a nondegenerate uniformly elliptic diffusion and a state-dependent finite jump measure. This existence and uniqueness were proved under Lpischitz conditions on vector field $g(\cdot)$ and the matrix $(a_{ij}(\cdot))$, as well as continuity of $r_i(\cdot)$ for each $i = 1, \ldots, m$, and some additional technical conditions. The case without jumps was proved in \cite{LS1984}; the general case follows from the standard construction by {\it piecing out}, \cite{Sawyer}. The reflection at the boundary translates into boundary conditions for~\eqref{calA-def}:
\begin{equation}
\label{eq:Neumann}
r_i(z)\cdot\nabla f(z) = 0,\,\quad z \in F_i,\, \quad i = 1, \ldots, m.
\end{equation}
The dynamics of this process can be described as follows:

\begin{itemize}
\item As long as it is strictly inside $D$, this process behaves as a jump-diffusion in $d$ dimensions with drift vector field $g$ diffusion matrix field $\Sigma$, and family $\varpi$ of jump measures. These jump measures are such that the process does not jump out of $D$. 

\item At a point $z \in F_i$, $i = 1, \ldots, m$, it is reflected back inside the domain $D$, according to the vector $r_i(z)$. 

\item If it hits the lower-dimensional edges: intersections of two or more faces $F_1, \ldots, F_m$, it is reflected back inside $D$ according to a positive linear combination of reflection vectors corresponding to these intersecting faces. 
\end{itemize}

Normal reflection corresponds to the case when $r_i(z) = \mathbf{n}_i(z)$, where $z \in F_i$ and $i = 1, \ldots, m$. 

\begin{rmk} In the case of a diffusion without reflection, the state space may be $\mathbb{R}^d$, or still some subset $D$. The latter happens if the drift coefficient is sufficiently large to compel the process to stay in a certain domain. An example of this is the drift for a Bessel process on the half-line, see \cite[Chapter 3, Problem 3.23]{KSBook}.
\end{rmk}

In the case $d = 1$, for a reflection on $[a, b]$, we have a normal reflection, and the boundaries consisting of two pieces $\{a\}$ and $\{b\}$. For a reflection on $[a, \infty)$, we have a normal reflection again, with the boundary $\{a\}$.

\subsection{Construction of the joint Markov process} \label{sec-diffusive-construction}
Let us now use symbol $z$ for a point in $D$ (instead of $x$). 
Take continuous functions $\lambda, \mu : D \to (0, \infty)$ with $\lambda(z) \le \mu(z)$ for $z \in D$. Define the {\it traffic intensity:}
\begin{equation}
\label{eq:intensity}
\rho(z) := \frac{\lambda(z)}{\mu(z)} \le 1.
\end{equation}
For every $z \in D$, consider an $M/M/1$ queue with arrival intensity $\lambda(z)$ and service intensity $\mu(z)$, where $n$ is the state of this queue. The process $\tilde{N}$ counting the number of jobs in the system, called the \emph{queueing process} in the sequel, is a continuous-time Markov process on $\mathbb Z_+$ with generator
\begin{equation} \label{eq:calMz}
\mathcal M_zf(n) = \lambda(z)(f(n+1) - f(n))  + 1_{\{n \ne 0\}}\mu(z)(f(n-1) - f(n)).
\end{equation}

\smallskip

We now consider a $(1+d)$-dimensional Markov process $(N, Z) = \{(N(t), Z(t)): t \ge 0\}$ with values in $\mathbb Z_+ \times D$ which evolves as follows:

\smallskip

(a) If $N(t) = n \in \mathbb Z_+$, then $Z$ behaves as a reflected jump-diffusion in $D$ with generator  $\rho^{-n}(z)\beta_n\mathcal A$ and reflection fields $r_1, \ldots, r_m$. 

\smallskip

(b) if $Z(t) = z$, then $N(t)$ jumps from $n$ to $n+1$ with intensity $\lambda(z)$, and (if $n \ne 0$) to $n-1$ with intensity $\mu(z)$.

\smallskip

 Here $\beta_n$ is the \emph{variability coefficient} for the diffusive environment, depending on the queueing state $n$, and  $\rho^{-n}(z)$ is the \emph{queueing impact} factor, capturing the impact from the traffic intensity (congestion) from the queueing process.  
The component $N$ can be informally described as the queueing process of an $M/M/1$ queue with  arrival and service rates, $\lambda(z)$ and $\mu(z)$, respectively. These rates depend on an auxiliary process $Z$. The dynamics of $Z$, however, depends on the current position of this queueing process. Therefore, we call such a system as \emph{an $M/M/1$ queue in an interactive diffusive environment.} 

\smallskip

The joint dynamics  is described via a combined Markov process $(N, Z)$ with the following generator: 
\begin{align}
\label{eq:generator}
\mathcal Lf(n, z)  = \mathcal M_zf(n, z) + \beta_n\rho^{-n}(z)\mathcal A f(n, z), \quad f \in \mathcal D.
\end{align}
Here, $\mathcal D$ stands for the following subspace of the domain of $\mathcal L$:
\begin{align} \label{calD-def}
\mathcal D & := \{f : \mathbb Z_+\times D \to \mathbb R\mid \forall n \in \mathbb Z_+\,, \, f(n, \cdot) \in \mathcal D_D\},\\
\mathcal D_D & := \{f \in C^2_b(D)\mid r_i(z)\cdot \nabla f(z) = 0,\,\, z \in F_i,\, i = 1, \ldots, m\}. \nonumber 
\end{align}
(Note that we were intentionally loose on the domains of $f$ in \eqref{calA-def} and \eqref{eq:calMz}, but they are clear from this definition.) From the general theory of {\it piecing out} it follows that this is a Feller process, see \cite{Sawyer}. We denote by $C^2_b(D)$  the set of twice continuously differentiable functions $D \to \mathbb R$ which are bounded with their first and second derivatives (the last condition, automatically fulfilled for bounded $D$). This is a separable Banach  space with the norm 
$$
\norm{f}_{D, 2} := \sup\limits_{z\in D}\left(|f(z)| + \norm{\nabla f(z)} + \norm{\nabla^2f(z)}\right). 
$$
Denote by $P^t(y, \cdot)$ the transition kernel of $(N, Z)$ where $y = (n, z) \in \mathbb Z_+\times D$. 

\smallskip

We give three special cases to illustrate the construction above. 

\smallskip
\begin{itemize}
\item[(a)]
 \emph{$M/M/1$ queue with an interactive diffusive arrival rate.}

\smallskip

\noindent Assume that $\lambda(z)=z$ and $\mu\equiv1$. 
Let $D=[0,1]$, and the generator $\mathcal{A}$ in \eqref{calA-def} be that of a  reflected diffusion in $(0,1)$ without jumps. 
The reflections at $0$ and $1$ correspond to the Neumann boundary conditions:
$$
 \frac{\partial}{\partial z} f(n, 0+) =0, \quad  \frac{\partial}{\partial z} f(n, 1-) = 0, \quad \forall n \in \ZZ_+. 
$$

\smallskip

\item[(b)] \emph{$M/M/1$ queue with an interactive diffusive service rate.}

\smallskip
\noindent Assume that $\lambda\equiv 1$ and $\mu(z) = z$. 
Let $D=[\mu_0,\infty)$ for some $\mu_0 \ge 1$, and the generator $\mathcal{A}$ in \eqref{calA-def} be that of a simple RBM on $[\mu_0, \infty)$ without jumps. The reflection at $\mu_0$ satisfies the Neumann boundary condition.

\smallskip

\item[(c)] \emph{$M/M/1$ queue with both diffusive arrival and service rates.}
\smallskip

\noindent Take $D = \{(z_1,z_2)\in \RR^2_+: z_2\le z_1 \}$ be a cone in the positive orthant, and the generator  $\mathcal{A}$ in \eqref{calA-def}  be that of a two-dimensional  Brownian motion in $D$ with normal reflections at the boundary.  
Let $(\lambda(z), \mu(z)) = z$. Then the arrival and service rates of the $M/M/1$ queue follow the dynamics of a reflected RBM in $D$ in the interactive manner described above. 
\end{itemize}

\subsection{Invariant measures} We need the following assumptions on some properties of the reflected jump-diffusion process. The first assumption states that for each level, there exists a steady-state distribution. The second assumption ensures that the whole process has a steady-state distribution.

\begin{asmp} 
\label{asmp:diff}
Assume the (reflected) jump-diffusion with generator $\mathcal A$ is positive recurrent, and has a unique stationary/invariant measure $\nu_D$, together with {\it boundary measures} $\nu_{F_i},\, i = 1, \ldots, m$. This means that the stationary copy of this process $\tilde{Z}^* = \{\tilde{Z}^*(t):\, t \ge 0$\} with $\tilde{Z}^*(t) \sim \nu_D$ for $t \ge 0$, satisfies the following condition: For every $t \ge 0$, each $i = 1, \ldots, m$, and every bounded function $f : F_i \to \mathbb R$, 
\begin{equation}
\label{eq:def-b-mes}
\mathbb E\int_0^tf(\tilde{Z}^*(s))\,\mathrm{d}\ell_i(s) = t\,\int_{F_i}f(z)\,\nu_{F_i}(\mathrm{d}z),
\end{equation}
where $\ell_i(s)$ is the nondecreasing process in \eqref{E-processZ}. 
\end{asmp}

\begin{asmp}
\label{asmp:finite}
The measure $\nu_D(\cdot)$ satisfies 
\begin{equation}
\label{eq:total}
\Xi := \int_D\frac{\nu_D(\mathrm{d}z)}{1 - \rho(z)} = \sum_{n=0}^{\infty} \int_D \rho^n(z) \nu_D(\mathrm{d}z) \,< \infty.
\end{equation}
\end{asmp}

Note that $\Xi$ is the normalization constant in the joint invariant measure of $(N,Z)$ in \eqref{eq:invariant-measure}. This invariant measure on each boundary $F_i$ has value zero.

\begin{rmk}
Similarly to~\eqref{eq:def-b-mes}, we can define the concept of {\it boundary measures} for the joint process $(N, Z)$. First, construct the boundary process $\ell_i = (\ell_i(t),\, t \ge 0)$ for the component $Z$ and face $F_i$ of the boundary $\partial D$. Assume $0 = \rho_0 < \rho_1 < \ldots$ are jump times for $N$. Then $Z(\rho_k+t)$  for $t \in [0, \rho_{k+1} - \rho_k]$ behaves as a reflected jump-diffusion on $D$ with generator $\rho^{-n_k}(z)\beta_{n_k}\mathcal A$ and reflection fields $r_1, \ldots, r_m$, with $N(t) = n_k$ for $t \in [\rho_k, \rho_{k+1})$. Thus there exist a continuous nondecreasing process $\ell_i^{(k)}(t),\, t \in [0, \rho_{k+1} - \rho_k]$ such that~\eqref{E-processZ} holds with adjusted drift vector field, diffusion matrix field, and jump measures family. Define
$$
\ell_i(t) = \ell_i(\rho_k) + \ell_i^{(k)}(t-\rho_k),\quad t \in [\rho_k, \rho_{k+1}],
$$
using induction over $k$. This defines $\ell_i = (\ell_i(t),\, t \ge 0)$ for $i = 1, \ldots, m$. Next, define a {\it boundary measure} $\nu_{F_i}$ on the face $F_i$ corresponding to a stationary distribution $\pi$ for this joint process $(N, Z)$: Take the corresponding stationary copy $(N^*, Z^*)$ with $(N^*(t), Z^*(t)) \sim \pi$ for $t \ge 0$. For a bounded function $f : \mathbb Z_+\times F_i \to \mathbb R$ and a $t \ge 0$, 
\begin{equation*}
\mathbb E\int_0^tf(\tilde{N}^*(s), \tilde{Z}^*(s))\,\mathrm{d}\ell_i(s) = t\,\sum\limits_{n=0}^{\infty}\int_{F_i}f(n, z)\,\nu_{F_i}(\{n\}\times\mathrm{d}z).
\end{equation*}
\label{rmk:boundary-measures-joint}
\end{rmk}

Now we are ready to state and prove the main result of this section. 

\begin{thm} 
Under Assumptions~\ref{asmp:diff} and~\ref{asmp:finite}, there is a unique invariant  measure for $(N, Z)$:
\begin{equation}
\label{eq:invariant-measure}
\pi(\{n\}, \mathrm{d}z) = \Xi^{-1}\rho^n(z)\nu_D(\mathrm{d}z).
\end{equation}
The corresponding boundary measures $\pi_i$ for $F_i$ (if there is reflection) are given by
\begin{equation}
\label{eq:boundary-measure}
\pi_i(\{n\}, \mathrm{d}z) = \Xi^{-1}\rho^n(z)\nu_{F_i}(\mathrm{d}z),\quad i = 1, \ldots, m.
\end{equation}
Finally, this Markov process is ergodic: for every $y \in \mathbb Z_+\times D$, 
\begin{equation}\label{eq:long-term-conv}
\norm{P^t(y, \cdot) - \pi(\cdot)}_{\TV} \to 0\quad \mbox{as}\quad t \to \infty.
\end{equation}
\label{thm:inv-measure}
\end{thm}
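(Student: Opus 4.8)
The plan is to verify the stationarity of the candidate measure $\pi$ in \eqref{eq:invariant-measure} directly against the generator $\mathcal L$ in \eqref{eq:generator}, and then invoke the standard ergodic theory for Feller Markov processes to upgrade stationarity to convergence in total variation. First I would check that $\pi$ is a probability measure: by Assumption \ref{asmp:finite}, $\sum_n \int_D \rho^n(z)\nu_D(\mathrm dz) = \Xi < \infty$, so dividing by $\Xi$ makes $\pi$ a genuine probability measure on $\mathbb Z_+\times D$ (and $\pi$ charges no boundary face, since $\nu_D$ is an interior measure by Assumption \ref{asmp:diff}). The heart of the argument is the identity $\langle\pi, \mathcal L f\rangle = 0$ for all $f \in \mathcal D$. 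I would split $\mathcal L f = \mathcal M_z f + \beta_n\rho^{-n}(z)\mathcal A f$ and handle the two pieces.

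For the queueing part, summing $\int_D \rho^n(z)\nu_D(\mathrm dz)\,\mathcal M_z f(n,z)$ over $n$ and resumming (the familiar birth-death telescoping: the coefficient of $f(n,z)$ assembles $\lambda(z)\rho^{n-1}(z) + \mu(z)\rho^{n+1}(z) - (\lambda(z)+\mu(z))\rho^n(z)$ inside the $z$-integral, which vanishes pointwise because $\rho = \lambda/\mu$), one finds $\sum_n\int_D\rho^n(z)\mathcal M_zf(n,z)\,\nu_D(\mathrm dz) = 0$. For the environment part, the factor $\rho^{-n}(z)$ is exactly engineered to cancel the weight: $\rho^n(z)\cdot\beta_n\rho^{-n}(z)\mathcal A f(n,z) = \beta_n\mathcal A f(n,z)$, so the contribution is $\sum_n \beta_n\int_D \mathcal A f(n,\cdot)\,\mathrm d\nu_D$. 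Here I would use Assumption \ref{asmp:diff}: since $\nu_D$ together with the boundary measures $\nu_{F_i}$ is stationary for $\mathcal A$, the basic stationarity identity (integrating the Dynkin/adjoint relation, or equivalently differentiating \eqref{eq:def-b-mes}-type identities) gives $\int_D \mathcal A h\,\mathrm d\nu_D + \sum_i \int_{F_i}(r_i\cdot\nabla h)\,\mathrm d\nu_{F_i} = 0$ for $h \in \mathcal D_D$; but $f(n,\cdot)\in\mathcal D_D$ satisfies the Neumann condition $r_i\cdot\nabla f(n,\cdot)=0$ on $F_i$, so the boundary terms drop and $\int_D\mathcal A f(n,\cdot)\,\mathrm d\nu_D = 0$ for every $n$. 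Adding the two pieces gives $\langle\pi,\mathcal Lf\rangle = 0$. The boundary-measure formula \eqref{eq:boundary-measure} then follows from the decomposition in Remark \ref{rmk:boundary-measures-joint}: on the interval $[\rho_k,\rho_{k+1})$ with $N\equiv n_k$, the local time $\ell_i$ accrues at rate dictated by $\beta_{n_k}\nu_{F_i}$, and weighting by the stationary occupation $\Xi^{-1}\rho^{n}(z)\nu_D(\mathrm dz)$ of the level-$n$ slice (again the $\rho^{-n}$ and $\beta_n$ factors combine cleanly) reproduces $\Xi^{-1}\rho^n(z)\nu_{F_i}(\mathrm dz)$.

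For convergence \eqref{eq:long-term-conv} and uniqueness, I would argue that $(N,Z)$ is a positive Harris recurrent Feller process and apply the standard total-variation ergodic theorem (as in \cite{MT1993a}, used already in the proof of Theorem \ref{thm-MM1}). Positive recurrence is immediate once we have a finite invariant measure. For Harris recurrence and irreducibility one exploits the nondegenerate uniform ellipticity of $\mathcal A$ (hence of $\beta_n\rho^{-n}(z)\mathcal A$ for each fixed $n$, since $\beta_n\rho^{-n}(z)\ge\beta_n>0$ on $D$): the $Z$-component, run between jump times of $N$, has a transition density that is locally bounded below, so the chain has reachable small sets, and the arrival/service jumps of $N$ with rates $\lambda(z),\mu(z)\in(0,\infty)$ connect all levels. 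Combined with aperiodicity (the diffusive component is genuinely continuous-time with no lattice structure), this yields $\psi$-irreducibility and aperiodicity, so the Meyn–Tweedie ergodic theorem applies and also forces uniqueness of the invariant measure.

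The main obstacle I anticipate is the rigorous justification of the integration-by-parts / adjoint step for the environment generator on the piecewise-smooth domain $D$ — i.e., making precise the claim $\int_D \mathcal A h\,\mathrm d\nu_D = 0$ for $h\in\mathcal D_D$ from Assumption \ref{asmp:diff}. One must be careful about edges (intersections of faces), about integrability of $\mathcal A h$ against $\nu_D$ when $D$ is unbounded (handled since $h\in C^2_b$ with bounded derivatives and $\mathcal A$ has continuous coefficients, plus the finite jump measure $\varpi(z,\cdot)$), and about exchanging the sum over $n$ with the integral in the resummation steps — the latter is controlled by $\sum_n\rho^n(z)<\infty$ from Assumption \ref{asmp:finite} and boundedness of $f$ and its derivatives, together with $|\mathcal M_zf(n,z)|\le (\lambda(z)+\mu(z))\cdot 2\|f\|_\infty$. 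A secondary technical point is confirming that $\mathcal D$ is a core for $\mathcal L$ (or at least measure-determining in the sense needed), so that $\langle\pi,\mathcal Lf\rangle=0$ on $\mathcal D$ genuinely certifies invariance; this follows from the piecing-out construction and the Feller property already asserted after \eqref{calD-def}.
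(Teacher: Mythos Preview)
Your argument is correct in its essentials and shares the two key cancellations with the paper: the $\rho^{-n}(z)$ factor in $\mathcal A_n$ cancels the weight $\rho^n(z)$ so that the environment contribution reduces to $\beta_n\int_D\mathcal A f(n,\cdot)\,\mathrm d\nu_D$, and the birth--death telescoping makes the queueing contribution vanish level by level. The ergodicity argument via irreducibility (from uniform ellipticity plus the positive jump rates of $N$) and Meyn--Tweedie is also the same in spirit; the paper spells out the irreducibility as a separate lemma, proving $P^t((n,z),G)>0$ by induction on the target level.

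Where the two proofs genuinely differ is in how the reflected boundary is handled. You test $\langle\pi,\mathcal Lf\rangle=0$ only against $f\in\mathcal D$, i.e.\ functions obeying the Neumann condition $r_i\cdot\nabla f=0$ on each face, so that the boundary integrals in the basic adjoint relation drop out; this is clean, but forces you to argue afterwards that $\mathcal D$ is a core (which you flag) and to identify the boundary measures $\pi_i$ by a separate, somewhat heuristic computation. The paper instead invokes the Kurtz--Stockbridge singular--martingale--problem framework \cite{KurtzStockbridge2001}: it tests against the larger class $f(n,\cdot)\in C^2_b(D)$ \emph{without} Neumann conditions, keeps the boundary terms, and verifies the full constrained BAR $\int Af\,\mathrm d\mu_0 + \int Bf\,\mathrm d\mu_1 = 0$ with the explicit boundary measure $\mu_1$ built from $\rho^n(z)\nu_{F_i}(\mathrm dz)$. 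This buys two things at once: (i) existence of the stationary process with the claimed interior \emph{and} boundary measures, without any core argument, and (ii) the formula~\eqref{eq:boundary-measure} falls out of the setup rather than needing a separate derivation. Your route is more elementary and arguably more transparent about why the product form works, but the paper's route is more self-contained on the reflected-boundary technicalities.
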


\begin{proof} From stationarity we immediately get: for all $f \in C^2_b(D)$, 
\begin{equation}
\label{eq:BAR}
\int_D\mathcal A f(z)\,\nu_D(\mathrm{d} z) + \sum\limits_{i=1}^m\int_{F_i}r_i\cdot \nabla f(z)\,\nu_{F_i}(\mathrm{d} z) = 0.
\end{equation}
This is called the {\it basic adjoint relationship} in the literature. We refer to \cite{WilliamsSurvey} for its deduction in the case of a convex polyhedron; the same is true for a general piecewise smooth domain $D$, as in our case. Apply \cite[Theorem 1.7, Theorem 2.2, Lemma 2.4, Remark 2.5]{KurtzStockbridge2001} using their notation, with the state space $E = \mathbb Z_+\times D$; $U = \{0, 1, \ldots, m\}$, where the point $0$ corresponds to the domain $D$ itself, and $i = 1, \ldots, m$, correspond to faces $F_1, \ldots, F_m$ of the boundary; for all $z \in D$, $n \in \mathbb Z_+$, and $u \in U$, 
\begin{align}
\label{eq:boundary-techniques}
\begin{split}
\mu_0(\{u\}\times\{n\}\times \mathrm{d}z) & = 1(u = 0)\,\rho^n(z)\nu_D(\mathrm{d}z),\\   \mu_1(\{u\}\times \{n\}\times \mathrm{d}z) & = 1(u \ne 0)\,\rho^n(z)\nu_{F_u}(\mathrm{d}z);\\ \mu^E_0(\{n\}\times\mathrm{d}z) &  = \rho^n(z)\nu_D(\mathrm{d}z), \\  \mu^E_1(\{n\}\times\mathrm{d}z) & = \rho^n(z)\left[\nu_{F_1}(\mathrm{d}z) + \ldots + \nu_{F_m}(\mathrm{d}z)\right];\\ \eta_0((n, z), \{u\}) & = 1(u = 0),  \\   \eta_1((n, z), \{u\}) & = 1(u \ne 0);\\ Af((n, z), u) &  := \mathcal Lf(n, z), \quad \text{cf.} \, \eqref{eq:generator}, \\   Bf((n, z), u) & := 1(u \ne 0,\, z \in \partial D)\, \beta_n\rho^{-n}(z)r_u(z)\cdot\nabla f(z).
\end{split}
\end{align}
We need to check \cite[Condition 1.2]{KurtzStockbridge2001} on the absolutely continuous generator $A$ and the singular generator $B$. Let
$$
\mathcal D := \{f : \mathbb Z_+ \times D \to \RR \mid \forall n \in \mathbb Z_+,\, f(n, \cdot) \in C^2_b(D)\}.
$$
Part (i) requires that $A,B: \mathcal{D} \subset C_b(E) \to C(E\times U)$, and the unity function $\mathbf{1}(n,z) = 1$ for $(n,z) \in E$ satisfies $\mathbf{1}\in \mathcal{D}$, $A\mathbf{1}=0$, and $B\mathbf{1}=0$. This is trivially satisfied. 

Part (ii) requires that there exist $\psi_A(n,z)$ and $\psi_B(n,z)$ in $C(E\times U)$, $\psi_A,\psi_B \ge 1$ and constants $a_f, b_f$, $f\in \mathcal{D}$ such that 
$$
|Af(x,u)| \le a_f \psi_A(x,u),\quad |Bf(x,u)| \le b_f \psi_B(x,u), \quad \forall (x,u) \in \mathcal{U}
$$
where $\mathcal{U}$ is any closed set of $E\times U$. We can take $a_f = b_f := \norm{f}_{D, 2}$, and 
$$
\psi_A = \psi_B = \norm{A(z)} + \sum_{i=1}^m\norm{r_i(z)} + \rho^n(z).
$$
Part (iii) requires the following: Defining $(A_0,B_0) = \{(f, \psi_A^{-1} A f, \psi_B^{-1} Bf): f \in \mathcal{D}\}$, $(A_0,B_0)$ is separable in the sense that there exists a countable collection $\{g_k\}\subset \mathcal{D}$ such that $(A_0,B_0)$ is contained in the bounded, pointwise closure of the linear span of 
 $\{(g_k, A_0g_k,B_0g_k) = (g_k,\psi_A^{-1}Ag_k,\psi_B^{-1} g_k)\}$.  
This is proved by taking a dense countable subset $\Upsilon$ of $C^2_b(D)$ in the norm $\norm{\cdot}_2$, and then taking a countable subset 
$$
\bigcup_{n=0}^{\infty}\mathcal [\Upsilon]^n \subseteq \mathcal D \simeq \left[C^2_b(D) \right]^{\mathbb Z_+}.
$$
This subset is dense in the sense of pointwise convergence.

Part (iv) requires that for each $u \in U$, the operators $A_u$ and $B_u$ defined by $A_uf(x)=Af(x,u)$ and $B_uf(x) = Bf(x,u)$ are pre-generators. This follows from \cite[Remark 1.1]{KurtzStockbridge2001}, because all these operators satisfy the positive maximum principle.  

Part (v) requires that $\mathcal{D}$ is closed under multiplication and separates points.  This follows directly from the definition.

Finally, we need to prove the main condition as in \cite[Theorem 1.7, (1.17)]{KurtzStockbridge2001}:
\begin{equation}
\label{eq:main-BAR}
\int_{E\times U}Af(x, u)\,\mu_0(\mathrm{d}x\times\mathrm{d}u) + \int_{E\times U}Bf(x, u)\,\mu_1(\mathrm{d}x\times\mathrm{d}u) = 0.
\end{equation}
From~\eqref{eq:boundary-techniques} and~\eqref{eq:generator}, canceling $\beta_n$ and $\rho^n(z)$ when appropriate, we rewrite the left-hand side of~\eqref{eq:main-BAR} as follows:
\begin{align}
\label{eq:final-BAR}
\begin{split}
& \sum\limits_{n=0}^{\infty}\beta_n \left[ \int_D\mathcal A f(n, z)\,\nu_D(\mathrm{d} z) + \sum\limits_{i=1}^m\int_{F_i}r_i(z)\cdot\nabla f(n, z)\,\nu_{F_i}(\mathrm{d}z) \right] \\  & \qquad +   \int_D\sum\limits_{n=0}^{\infty}\rho^n(z)\mathcal M_zf(n, \cdot)\,\nu_D(\mathrm{d} z).
\end{split}
\end{align}
The first line in~\eqref{eq:final-BAR} is equal to zero; this follows from~\eqref{eq:BAR}. Let us show that the second line in~\eqref{eq:final-BAR} is equal to zero, too. For every $z \in D$, $\mathcal M_z$ is the generator of the $M/M/1$ queue with arrival and service rates $\lambda(z)$ and $\mu(z)$. This queue has geometric stationary distribution $(1 - \rho(z))\rho^n(z),\, n \in \mathbb Z_+$. Thus 
\begin{equation}
\label{eq:Q}
\sum\limits_{n=0}^{\infty}\rho^n(z)\mathcal M_zf(n, \cdot) = 0,\quad z \in D.
\end{equation}
Integrating~\eqref{eq:Q} with respect to $\mu_D(\mathrm{d}z)$, we get: The second line in~\eqref{eq:final-BAR} is equal to zero. We interchanged integration and series, which we can do by uniform boundedness of $f$ combined with Assumption~\ref{asmp:finite}. This completes the proof of~\eqref{eq:main-BAR}, and with it \cite[(1.17)]{KurtzStockbridge2001}. Next, $K_1 := \partial D$ is the closed support for $\mu_1^E$. By \cite[Remark 2.5]{KurtzStockbridge2001}, the results of \cite[Lemma 2.4]{KurtzStockbridge2001} hold, and we can apply \cite[Theorem 2.2 (f)]{KurtzStockbridge2001}, and obtain the stationary copy of our process $(N, Z)$. 

\smallskip

We have written the proof for reflected diffusions. For non-reflected ones, it is simpler: we can simply verify~\eqref{eq:BAR}, which in our case then becomes
\begin{equation}
\label{eq:BAR-non-reflected}
\int_D\mathcal A f(z)\,\nu_D(\mathrm{d} z) = 0,\quad f \in C^2(D). 
\end{equation}
This is done similarly to the computation above, but without all boundary terms. The lack of reflection obviates the need to apply results cited above from \cite{KurtzStockbridge2001}. 

\smallskip

Finally, ergodicity follows from \cite[Theorem 6.1]{MT1993a} in the following way (for terminology, we refer the reader to this cited article \cite{MT1993a}).  Our process is positive Harris recurrent, since the invariant measure is finite. Meanwhile, every skeleton chain is irreducible, because of the following {\it irreducibility property}. Define a Lebesgue measure on $\mathbb Z_+\times D$ as a sum of Lebesgue measures on each layer of this set. 

\begin{lemma} For every $n \in \mathbb Z_+,\, z \in D$, and a subset $G \subseteq \mathbb Z_+\times D$ of positive Lebesgue measure, 
\begin{equation}
\label{eq:pvity}
P^t((n, z), G) > 0.
\end{equation}
\label{lemma:pve}
\end{lemma}

\begin{proof} Without loss of generality, assume $G = \{m\}\times E$ for a subset $E \subseteq D$ of positive Lebesgue measure, and $m \ge n$. We prove the statement~\eqref{eq:pvity} by induction over $m$. 

\smallskip

{\it Induction Base:} $m = n$. Consider the probability 
$$
P^t(y, G) = \mathbb P_{(n, z)}(N(t) = n,\, Z(t) \in E)
$$
that, starting from $y = (n, z)$, the joint process $(N, Z)$ at time $t$ will be in $\{n\}\times E$. This probability is bounded from below by
\begin{equation}
\label{eq:P-Q}
P^t(y, G) \ge Q^t_n(z, E) := \mathbb P_{(n, z)}\bigl(Z(t) \in E,\, N(s) = n,\, \forall\, s \in [0, t]\bigr).
\end{equation}
This probability $Q^t_n(z, E)$, in turn, is estimated from below by (with $z_* > 0$ fixed later):
\begin{align}
\label{eq:Q-tilde}
\begin{split}
Q^t_n(z, E) & \ge \tilde{Q}^t_n(z, z_*, E) := \mathbb P_{(n, z)}\bigl(Z(t) \in E,\,  \norm{Z(t)} \le z_*;\, N(s) = n,\, \forall\, s \in [0, t]\bigr)
\\ & \ge \exp\bigl(-t\max\limits_{\norm{z} \le z_*}(\lambda(z) + \mu(z))\bigr)\cdot q_*.
\end{split}
\end{align}
Here, $q_*$ is the probability that, starting from $Z_n(0) = z$, the reflected jump-diffusion $Z_n$ in $D$ with generator $\rho^{-n}(z)\mathcal L$ and reflection vector fields $r_1, \ldots, r_m$ ends at $Z_n(t) \in E$ and $\norm{Z_n(s)} \le z_*$ for $s \in [0, t]$. It follows from known properties of reflected jump-diffusions with nonsingular covariance matrix $\Sigma(\cdot)$ that $q_* > 0$ for large enough $z_* > 0$. This, together with~\eqref{eq:P-Q} and~\eqref{eq:Q-tilde}, proves that 
\begin{equation}
\label{eq:base}
P^t(y, G) \ge Q^t_n(z, E) \ge \tilde{Q}^t_n(z, z_*, E) > 0.
\end{equation}
Thus we have proved the statement~\eqref{eq:pvity} for $m = n$. 

\smallskip

{\it Induction Step:} First, consider the case $m = n+1$. This probability $P^t(y, G)$ is estimated from below by the probability that for some time $\tau \in [0, t]$, the process $N$ will stay at level $n$, then  jump at time $\tau$ at level $n+1$ and  stay there until time $t$, and $Z(t) \in E$. If $\hat{\mu}$ is the distribution of $\tau$  (which is a positive measure on $[0, t]$), then 
\begin{equation}
\label{eq:pve}
P^t(y, G) \ge \int_0^t\int_DQ^s_n(y, \mathrm{d}w)\,Q^{t-s}_{n+1}(w, E)\,\hat{\mu}(\mathrm{d}s).
\end{equation}
It suffices to show that the double integral in the right-hand side of~\eqref{eq:pve} is positive. Indeed, from~\eqref{eq:base} we get: $Q^s_n(y, E') > 0$ for $E' \subseteq D$ of positive Lebesgue measure, and $\quad Q^{t-s}_{n+1}(w, E) > 0$. In addition, $\hat{\mu}$ is a positive measure on $[0, t]$. Use twice the observation that the integral of a positive function over a positive measure is positive, and complete the proof that the right-hand side (and therefore the left-hand side) in~\eqref{eq:pve} is positive. 

\smallskip

Assuming we proved~\eqref{eq:pvity} for $m = n+k$, $k \ge 0$, let us prove this for $m = n + k + 1$:
\begin{equation}
\label{eq:pve-k}
P^t(y, G) \ge \int_DP^{t/2}(y, (n+k, \mathrm{d}w))\,P^{t/2}((n+k, w), \{n+k+1\}\times E) > 0.
\end{equation}
This follows from the same logic: The function $P^{t/2}((n+k, w), \{n+k+1\}\times E)$ is positive by the previous part of the induction step, applied to $n+k$ instead of $n$, and to $n+k+1$ instead of $m = n+1$. The measure $P^{t/2}(y, (n+k, \mathrm{d}w))$  is positive by the induction hypothesis. 
This completes the proof of this lemma.
\end{proof}

Using Lemma~\ref{lemma:pve}, we have shown ergodicity as in~\eqref{eq:long-term-conv}. Earlier, we have proved~\eqref{eq:invariant-measure} and~\eqref{eq:boundary-measure}. Thus we have completed the proof of Theorem~\ref{thm:inv-measure}. 
\end{proof}

\def\rd{\rm d} \def\diy{\displaystyle} \def\cA{\mathcal A}

{\bf Remark 2.3.} The crucial property is that for each $n\in \ZZ_+,\, z \in D,\,t>0, V,V'\subseteq D$, 
\beql{as-MM1-D2}\begin{array}{l}\diy
\int_{D\times D}{\mathbf 1}(z\in V,z'\in V'){\tt p}^t(z,z') \nu_D(\rd z)\nu_D(\rd z')\\
\qquad\qquad = \int_{D\times D} {\mathbf 1}(z\in V,z'\in V') {\tt p}^t(z',z)\nu_D(\rd z)\nu_D(\rd z'). 
\end{array}\eeq
In fact, further generalizations depend on whether an analog of this equality can be established.
Here ${\tt p}^t$ stands for the transition density for the diffusion with generator $\cA$ in \eqref{calA-def}.

\medskip

\subsection{A more general setup}

We  offer a similar result under a more general feedback scheme. For $n = 1, 2, \ldots$, fix a piecewsie smooth domain $D_n \subseteq D$ with $m_n$ faces of the boundary $\partial D_n$:
\begin{equation}
\label{eq:faces-n}
F^{(n)}_1, \ldots, F^{(n)}_{m_n},
\end{equation}
and corresponding reflection vector fields 
\begin{equation}
\label{eq:refl-n}
r^{(n)}_i : F^{(n)}_{i} \to \mathbb R^d.
\end{equation}
For each $n \in \mathbb Z_+$, this domain $D_n$, its boundary $\partial D_n$ with faces~\eqref{eq:faces-n}, and reflection vector fields~\eqref{eq:refl-n} satisfy the same assumptions enunciated at the very beginning of Section~\ref{sec-diffusive}, as the original domain $D$ and reflection vector fields $r_1, \ldots, r_m$. In addition, we impose the following assumptions on domains $D$ and $D_n$. 

\begin{asmp}
\label{asmp:domains}
For all $n \in \mathbb Z_+$, $D_n\cap D_{n+1}$ contains an open subset of $D$; and $D = \cup_{n \in \mathbb Z_+}D_n$. 
\end{asmp}

For every level $N(t) = n$ of the queue-size component, the environment variable $z\in D$ is kept fixed when $z\in D\setminus D_n$ and follows a reflected jump-diffusion process in $D_n$ as in~\eqref{E-processZ} where parameters vary with $n$. In other words, the process ${\widetilde Z}_n$ lives in $D$ but its mechanism depends on $n$. The generator ${\mathcal A}_n$ of ${\widetilde Z}_n$ has the form
\begin{equation} \label{calAn-def}
{\mathcal A}_n f(z) = g(z)\cdot\nabla f(z) + \frac12\sum\limits_{i=1}^d\sum\limits_{j=1}^da_{ij}(z)\frac{\partial^2f(z)}{\partial z_i\partial z_j} + \int_{D_n}(f(z') - f(z))\varpi(z, \mathrm{d} z'),
\end{equation}
for $z\in D_n$, and $\mathcal A_nf(z) = 0$ for other $z$. 
The generator $\mathcal L$ of the joint process, instead of~\eqref{eq:generator}, has the following form: 
\begin{align}
\label{eq:generator-new-version}
\mathcal Lf(n, z)  = \mathcal M_zf(n, z) + \beta_n\rho^{-n}(z)\mathcal A_nf(n, z), \quad f \in \widetilde{\mathcal D}.
\end{align}
Here $\widetilde{\mathcal D}$ is the following domain, defined similarly to \eqref{calD-def}:
\begin{align} 
\label{calD-def-new-version}
&\;\widetilde{\mathcal D}\;\;\;:= \;\{f : \mathbb Z_+\times D \to \mathbb R\mid \forall n \in \mathbb Z_+\,, \, f(n, \cdot) 
\in \mathcal D^{(n)}\},\\
&\mathcal D^{(n)} := \{f : D\to{\mathbb R}\mid f\in C_b({\overline D}),\, \left.f\right|_{D_n}\in C^2_b(D_n)
\cap C^1_b({\overline D}_n),\\
&\qquad\qquad \; r^{(n)}_i(z)\cdot \nabla f(z) = 0,\,\, 
z \in F^{(n)}_i,\, i = 1, \ldots, m_n\}. \nonumber \end{align}

Note that in this setup, the dependence of the generator $\mathcal{A}_n$ on the queueing state $n$ is only through the domain $D_n$ while the drift vector field $g(\cdot)$, covariance matrix field $\Sigma(\cdot)$, and jump measure family $\varpi(\cdot, \cdot)$ are all independent of $n$; see also  Examples \ref{ex:2.1} and \ref{ex:2.2}.

\smallskip

Let us impose assumptions on $\mathcal A_n$, similar to Assumptions~\ref{asmp:diff} and~\ref{asmp:finite}. 

\begin{asmp} 
\label{asmp:diff-new-version}
For every $n \in \mathbb Z_+$, the above (reflected) jump-diffusion in $D_n$ has a unique invariant distribution $\nu^{(n)}_{D_n}$, with corresponding boundary measures $\nu^{(n)}_{F^{(n)}_i},\, i = 1, \ldots, m_n$. 
\end{asmp}

\begin{asmp}
\label{asmp:sameness}
There exists a finite measure $\upsilon$ on $D$ whose restriction $\nu^{(n)}_{D_n}$ on $D_n$ is a stationary measure for $\widetilde{Z}_n$, for every $n$. 
\end{asmp}

This independence of the invariant measure $\upsilon$ of $n$  is similar to Assumption \ref{as-MM1-T} in Section 3.

\begin{asmp}
\label{asmp:finite-new-version}
We have:
\begin{equation}\label{eq:total-new-version}
\Xi := \sum\limits_{n=0}^{\infty}\int_{D_n}\rho^n(z)\upsilon(\mathrm{d}z)\,< \infty.
\end{equation}
\end{asmp}

Under these assumptions, we obtain the following theorem, analogous to  Theorem~\ref{thm:inv-measure}. 

\begin{thm}\label{thm:2.2}
Under Assumptions~\ref{asmp:domains}--\ref{asmp:finite-new-version}, the combined proces $(N,Z)$ with the generator $\mathcal L$ from \eqref{calAn-def} has a unique invariant probability distribution $\pi$ given by 
\begin{equation}
\label{eq:invariant-measure-new-version}
\pi(\{n\}, \mathrm{d}z) = \Xi^{-1}\rho^n(z)\upsilon(\mathrm{d}z).
\end{equation}
The corresponding boundary measures $\nu_{F_i}$ for $F_i$ (if there is reflection) are given by
\begin{equation}
\label{eq:boundary-measure-new-version}
\nu_{F_i}(\{n\}, \mathrm{d}z) = \Xi^{-1}\rho^n(z)\nu^{(n)}_{F_i}(\mathrm{d}z),\quad i = 1, \ldots, m_n.
\end{equation}
Finally, this process is ergodic in the sense of~\eqref{eq:long-term-conv}.  
\end{thm}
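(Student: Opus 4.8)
The plan is to follow the architecture of the proof of Theorem~\ref{thm:inv-measure}, adapting each step to the $n$-dependence of the domains $D_n$, faces $F^{(n)}_i$, and reflection fields $r^{(n)}_i$. There are two essentially independent parts. First, verify that $\pi$ from \eqref{eq:invariant-measure-new-version}, together with the boundary measures \eqref{eq:boundary-measure-new-version}, solves the stationarity (basic adjoint) relationship, via the Kurtz--Stockbridge apparatus of \cite{KurtzStockbridge2001}. Second, establish positive Harris recurrence and irreducibility of skeleton chains to deduce uniqueness of $\pi$ and ergodicity in the sense of \eqref{eq:long-term-conv}, by the same Meyn--Tweedie argument \cite{MT1993a} used in the proof of Theorem~\ref{thm:inv-measure}.

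For the first part I would reproduce the data of \cite{KurtzStockbridge2001} as in \eqref{eq:boundary-techniques}, with $D,F_i,r_i,\nu_D,\nu_{F_i}$ replaced by their $n$-indexed analogues: take $U=\mathbb Z_+$, with $0$ labelling the bulk and $i\ge 1$ labelling $F^{(n)}_i$ whenever $i\le m_n$; set $\mu_0(\{0\}\times\{n\}\times\mathrm{d}z)=\rho^n(z)\upsilon(\mathrm{d}z)$ on all of $D$, $\mu_1(\{i\}\times\{n\}\times\mathrm{d}z)=\mathbf 1(1\le i\le m_n)\rho^n(z)\nu^{(n)}_{F^{(n)}_i}(\mathrm{d}z)$, $Af((n,z),u)=\mathcal Lf(n,z)$ with $\mathcal L$ from \eqref{eq:generator-new-version}, and $Bf((n,z),u)=\mathbf 1(u\ne 0,\ z\in\partial D_n)\,\beta_n\rho^{-n}(z)r^{(n)}_u(z)\cdot\nabla f(n,z)$; as in the proof of Theorem~\ref{thm:inv-measure}, the test-function class is taken to consist of all $f$ with $f(n,\cdot)\in C^2_b(D_n)\cap C^1_b(\overline{D}_n)$ for each $n$, \emph{without} imposing the Neumann conditions of \eqref{calD-def-new-version} (those are absorbed into $B$). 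The verification of \cite[Condition 1.2]{KurtzStockbridge2001}, parts (i)--(v), is identical to the one already carried out. The crux is \cite[(1.17)]{KurtzStockbridge2001}: after cancelling $\beta_n$ and $\rho^n(z)$, its left-hand side splits into a ``diffusion plus reflection'' part $\sum_{n}\beta_n\bigl[\int_D\mathcal A_nf(n,z)\,\upsilon(\mathrm{d}z)+\sum_{i=1}^{m_n}\int_{F^{(n)}_i}r^{(n)}_i(z)\cdot\nabla f(n,z)\,\nu^{(n)}_{F^{(n)}_i}(\mathrm{d}z)\bigr]$ and a ``queueing'' part $\int_D\sum_{n}\rho^n(z)\mathcal M_zf(n,\cdot)\,\upsilon(\mathrm{d}z)$. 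The queueing part vanishes because for fixed $z$ the generator $\mathcal M_z$ has geometric stationary law $(1-\rho(z))\rho^n(z)$, hence $\sum_{n}\rho^n(z)\mathcal M_zf(n,\cdot)=0$ pointwise, the interchange of sum and integral being licensed by boundedness of $f$ and Assumption~\ref{asmp:finite-new-version}. The diffusion part vanishes term-by-term: since $\mathcal A_nf\equiv 0$ on $D\setminus D_n$ and, by Assumption~\ref{asmp:sameness}, the restriction of $\upsilon$ to $D_n$ equals $\nu^{(n)}_{D_n}$, the $n$-th bracket equals $\int_{D_n}\mathcal A_nf(n,z)\,\nu^{(n)}_{D_n}(\mathrm{d}z)+\sum_{i=1}^{m_n}\int_{F^{(n)}_i}r^{(n)}_i\cdot\nabla f\,\nu^{(n)}_{F^{(n)}_i}(\mathrm{d}z)$, which is precisely the level-$n$ basic adjoint relationship and so is $0$ by Assumption~\ref{asmp:diff-new-version}. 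Then \cite[Theorem 1.7, Theorem 2.2(f), Lemma 2.4, Remark 2.5]{KurtzStockbridge2001} furnish the stationary copy of $(N,Z)$, identify its marginal as $\pi$, and give the boundary measures \eqref{eq:boundary-measure-new-version}. In the non-reflected case the argument collapses to checking $\int_D\mathcal A_nf(n,z)\,\upsilon(\mathrm{d}z)=0$ directly, without \cite{KurtzStockbridge2001}.

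For ergodicity I would copy the end of the proof of Theorem~\ref{thm:inv-measure}: $\pi$ is finite, so the process is positive Harris recurrent, and it remains to show every skeleton chain is irreducible, which reduces to the analogue of Lemma~\ref{lemma:pve}, namely $P^t((n,z),G)>0$ for all $(n,z)$ and all $G\subseteq\mathbb Z_+\times D$ of positive Lebesgue measure. The level-by-level induction is as before, preceded by one extra move to deal with the freezing of the environment on $D\setminus D_n$: from $(n,z)$, first let $N$ run (with the positive rates $\lambda,\mu$, while $z$ is held fixed) until it reaches a level $n'$ with $z\in D_{n'}$, which exists since $D=\cup_{n}D_n$ by Assumption~\ref{asmp:domains}; then, using positivity of the transition density of the non-degenerate reflected jump-diffusion with generator $\beta_{n'}\rho^{-n'}(z)\mathcal A_{n'}$ inside $D_{n'}$, steer the environment into an open subset of $D_{n'}\cap D_{n'+1}$ (nonempty by Assumption~\ref{asmp:domains}); from such a state the induction over queue levels of Lemma~\ref{lemma:pve} runs verbatim, alternating ``move the diffusion within the current level'' with ``increment or decrement $N$ while the environment sits in the overlap''. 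Positivity of each factor (a diffusion transition density, an exponential holding-time density, a positive measure on $[0,t]$), together with the observation that the integral of a positive function against a positive measure is positive, closes the induction; uniqueness of $\pi$ follows from irreducibility.

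I expect the principal obstacle to be the bookkeeping forced by the frozen environment on $D\setminus D_n$: one must check that the basic adjoint relationship still closes even though $\mathcal A_n$ is supported only on $D_n$ while $\mathcal M_z$ acts for all $z$ --- this works exactly because Assumption~\ref{asmp:sameness} makes the restrictions of the single measure $\upsilon$ agree with the level-$n$ stationary measures $\nu^{(n)}_{D_n}$ --- and one must justify the inter-level navigation in the irreducibility argument using only Assumption~\ref{asmp:domains}. A secondary, routine nuisance is that the number of faces $m_n$ varies with $n$, which forces the index set $U$ to be countable and requires masking, via indicator functions, the faces absent at a given level.
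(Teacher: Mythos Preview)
Your proposal is correct and mirrors the paper's proof almost exactly: the paper likewise reduces the stationarity part to rewriting the Kurtz--Stockbridge data \eqref{eq:boundary-techniques} with the $n$-indexed ingredients (its \eqref{eq:boundary-techniques-new-version}) and then proves an adjusted irreducibility lemma (Lemma~\ref{lemma:pve-adjusted}) to feed into the Meyn--Tweedie ergodicity argument.

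The one place your sketch is thinner than the paper is the irreducibility step. Saying the induction of Lemma~\ref{lemma:pve} ``runs verbatim'' once you have reached a level where the diffusion is active is not quite right: the \emph{target} $G=\{m\}\times E$ may have $E\cap D_m=\varnothing$, and then at level $m$ the environment is frozen and cannot be steered into $E$. The paper splits off this situation as a separate Case~(b): find the $k$ closest to $m$ with $E\cap D_k$ of positive Lebesgue measure (such $k$ exists since $E\subseteq D=\cup_k D_k$), reach $\{k\}\times(E\cap D_k)$ in time $t/2$ via the induction you describe, and then let $N$ jump from $k$ to $m$ in time $t/2$ while the environment stays frozen at a point of $E$ (by the choice of $k$, all intermediate levels $l$ have $z\notin D_l$). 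This is exactly the frozen-environment trick you already invoke at the \emph{starting} end; you just need to apply it symmetrically at the target end as well.
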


\begin{proof} 
For the proof of the stationary measure, we proceed very similarly to the proof of Theorem \ref{thm:inv-measure},  except that we change~\eqref{eq:boundary-techniques} 
\begin{align}
\label{eq:boundary-techniques-new-version}
\begin{split}
\mu_0(\{u\}\times\{n\}\times \mathrm{d}z) & = 1(u = 0)\,\rho^n(z)\,\upsilon(\mathrm{d}z),\\   \mu_1(\{u\}\times \{n\}\times \mathrm{d}z) & = 1(u \ne 0)\,\rho^n(z)\,\nu^{(n)}_{F_u}(\mathrm{d}z);\\ \mu^E_0(\{n\}\times\mathrm{d}z) &  = \rho^n(z)\,\upsilon(\mathrm{d}z), \\  
\mu^E_1(\{n\}\times\mathrm{d}z) & = \rho^n(z)\nu^{(n)}_{F_i}(\mathrm{d}z), \quad z\in F_i, \, i=1,\dots,m;
\\ \eta_0((n, z), \{u\}) & = 1(u = 0),  \\   \eta_1((n, z), \{u\}) & = 1(u \ne 0);\\ Af((n, z), u) &  := \mathcal Lf(n, z), \quad \text{cf.} \, \eqref{eq:generator-new-version}, \\   Bf((n, z), u) & := 1(u \ne 0,\, z \in \partial D)\,\rho^{-n}(z)\,r_u(z)\cdot\nabla f(z).
\end{split}
\end{align}
To prove ergodicity as in~\eqref{eq:long-term-conv}, similarly to Theorem~\ref{thm:inv-measure}, we show an analogue of Lemma~\ref{lemma:pve}:

\begin{lemma} For all $n, m \in \mathbb Z_+$, $z \in D$, and a subset $G \subseteq \mathbb Z_+\times D$ of positive Lebesgue measure:
\begin{equation}
\label{eq:pvity-n}
P^t((n, z), G) > 0.
\end{equation}
\label{lemma:pve-adjusted}
\end{lemma}

\begin{proof} Similarly to Lemma~\ref{lemma:pve-adjusted}, without loss of generality, assume $G = \{m\}\times E$ for a subset $E \subseteq D$ of positive Lebesgue measure, and $m \ge n$. 

\smallskip

{\it Case (a).} $z \in D_n,\, E \subseteq D_m$. We prove this statement similarly to Lemma~\ref{lemma:pve}, using induction over $m$. Induction base ($m = n$): can be shown as in~\eqref{eq:P-Q}. Induction step: for $m = n + 1$ we prove this as in~\eqref{eq:pve} (using the same notation), but we integrate over $D_n\cap D_{n+1}$ instead of $D$:
$$
P^t((n, z), \{m\}\times E) \ge \int_0^t\int_{D_n\cap D_{n+1}}Q^s_n(y, \mathrm{d}w)\,Q^{t-s}_{n+1}(w, E)\,\hat{\mu}(\mathrm{d}s).
$$
Assuming we proved this for $m = n + k$, let us prove this for $m = n + k + 1$. Similarly to~\eqref{eq:pve-k}, but integrating over $D_{n+k}\cap D_{n+k+1}$, we get:
$$
P^t((n, z), \{m\}\times E) \ge \int_{D_{n+k}}P^{t/2}(y, (n+k, \mathrm{d}w))\,P^{t/2}((n+k, w), \{n+k+1\}\times E) > 0.
$$
This completes the proof of the induction step, and with it the proof of~\eqref{eq:pvity-n} in case (a). 

\smallskip

{\it Case (b).} $z \in D_n,\, E\cap D_m = \varnothing$. (Clearly, we can reduce the case of a general $E$ to these two cases (a) and (b).) Since $E \subseteq D = \cup_k D_k$, there exists a $k$ such that $E \cap D_k$ has positive Lebesgue measure. Take the $k$ with such a property which is closest to $m$. The process can get from $(n, z)$ to $\{k\}\times (E\cap D_k)$ with positive probability in time $t/2$, using the path described in case (a) above. Afterwards, for every $z \in E\cap D_k$, the process $(N, Z)$ can jump from $(k, z)$ to $(m, z)$ in time $t/2$ with positive probability. Indeed, for $l$ between $k$ and $m$ we have $z \notin D_l$; thus the component $N$ will jump from $k$ to $m$, and the environment component $Z$ will stay constant at $z$.

\smallskip

{\it Case (c).} $z \notin D_n$. There exists a $k$ such that $z \in D_k$, since $z \in D = \cup_kD_k$. Find such $k$ wich is closest to $n$. The process $(N, Z)$ can get from $(n, z)$ to $(k, z)$ in time $t/2$ with positive probability:  The queue component $N$ will jump from $n$ to $k$, and the environment component $Z$ will stay constant at $z$, since $z \notin D_l$ for $l$ between $n$ and $k$. Starting the process from $(k, z)$ instead of $(n, z)$ now, we are back to cases (a) and (b). Applying results from these cases for $t/2$ instead of $t$, we prove~\eqref{eq:pvity-n} for $z \notin D_n$. 
\end{proof}

We proved Lemma~\ref{lemma:pve-adjusted}, and with it we proved ergodicity~\eqref{eq:long-term-conv}, and thus Theorem~\ref{thm:2.2}. 
\end{proof}

\medskip

Now we provide examples in which the generator of the diffusive component in the joint process depends on the queueing state in a nontrivial manner.

\begin{exm} \label{ex:2.1} Assume $D = [0, 1]$, $D_n = [0, \alpha_n]$, $\lambda(z) = z$, $\mu(z) = 1$. Assume ${\mathcal A}_n$ is a reflected diffusion (without jumps):
\begin{equation}
\label{eq:particular-generator}
{\mathcal A}_nf(z) = \frac{a^2(z)}{2}f''(z)+b(z)f'(z),\quad z \in D_n.
\end{equation}
The functions $a,b\in C^2_b([0,1])$ are given, describing the local diffusion 
coefficient and the local drift of the processes ${\widetilde Z}_n$ in $D_n$, with $a(z)>0$ for $z \in (0, 1)$. Standard formulas from \cite{Handbook} guarantee that the measure $\upsilon$ on $D$ has Lebesgue density 
\begin{equation}
\label{eq:invariant-upsilon-1}
q(z) = \frac{2}{a^2(z)}\exp\left(\int_0^z\frac{2b(y)}{a^2(y)}\,{\rm d}y\right)
\end{equation}
assuming that 
\begin{equation}
\label{eq:finite-measure-1}
\int_0^1\frac{|b(y)|}{a^2(y)}\,{\rm d}y < \infty.
\end{equation}
Assumption~\ref{asmp:finite-new-version} becomes
\begin{equation}
\label{eq:new-condition-1}
\sum\limits_{n=0}^{\infty}\int_0^{\alpha_n}\rho^n(z)q(z)\,\mathrm{d}z < \infty.
\end{equation}
In particular, for $a(z)\equiv 1$ and $b(z)=\theta/(z-1)$ with $\theta>0$, we get $q(z)=2(1-z)^{-2\theta}$. If we choose 
\begin{equation}
\label{alphan}
\alpha_n = 
\begin{cases}
1,\quad n < n_0;\\
\alpha_*,\quad n \ge n_0,
\end{cases}
\end{equation}
for some $\alpha_*\in (0,1)$ and $n_0\in \NN$, then~\eqref{eq:new-condition-1} holds for $\theta \in (0, 1/2)$. If $a \equiv 1$ and $b \equiv 0$, then the driving process for the environment is a reflected Brownian motion, with $\upsilon$ being the Lebesgue measure, and $q(z) \equiv 1$. 

This example can be interpreted as follows. We keep the service rate fixed: $\mu =1$, while the arrival rate $\lambda$ varies as a reflected diffusion on $[0,1]$ if the queue size $n$ is less than an agreed threshold $n_0$. However, if $n$ reaches level $n_0$ while $\lambda< \alpha^*$, we allow $\lambda$ to vary only in a ``safety range'' $[0, \alpha^*]$. If $n$ attains level $n_0$ while $\lambda \ge \alpha^*$,  we simply ``freeze" $\lambda$ until the queue size becomes $n_0-1$, at which time $\lambda$ is again allowed to follow the diffusion on $[0,1]$.  
\end{exm}

\smallskip

\begin{exm}\label{ex:2.2} Fix the arrival rate $\lambda =1$ while the service rate $\mu_n$ is subject to a reflected diffusion on the interval $D_n:=[\alpha_n,\alpha^*]\subset [1, \alpha^*]=:D$ 
and kept unchanged in $D\setminus D_n$. Here $\alpha_*>1$ is a fixed constant. 
Here again, the generator ${\mathcal A}_n$ is given by~\eqref{eq:particular-generator}, with $a,b\in{C}_b^2([1,\alpha^*])$; this operator from~\eqref{eq:particular-generator} acts on $f\in {C}^2([\alpha_n,\alpha^*])$ with boundary conditions $f'(\alpha_n)=f'(\alpha^*) = 0$. Instead of~\eqref{eq:invariant-upsilon-1}, we have
\begin{equation}
\label{eq:invariant-upsilon-2}
q(z) = \frac{2}{a^2(z)}\exp\left(\int_{1}^z\frac{2b(y)}{a^2(y)}\,{\rm d}y\right).
\end{equation}
and instead of assumption~\eqref{eq:finite-measure-1}, we have
\begin{equation}
\label{eq:finite-measure-2}
\int_1^{\alpha^*}\frac{|b(y)|}{a^2(y)}\,{\rm d}y < \infty.
\end{equation}
Assumption~\ref{asmp:finite-new-version} becomes
\begin{equation}
\label{eq:new-condition-2}
\sum\limits_{n=0}^{\infty}\int_{\alpha_n}^{\alpha^*}\rho^n(z)q(z)\,\mathrm{d}z < \infty.
\end{equation}
As in Example~\ref{ex:2.1}, if $a \equiv 1$, $b \equiv 0$, then the driving process for the environment is a reflected Brownian motion, with $\upsilon$ being the Lebesgue measure, and $q(z) \equiv 1$. 
\end{exm}

\medskip

\section{Explicit rates of exponential convergence}
\label{sec-rate}

\subsection{A brief summary of results and methods} In this section, we prove (for both discrete-space and reflected diffusion environments) that for some constants $C, \varkappa > 0$, we have
\begin{equation}\label{eq:exp-ergodicity}
\norm{P^t(x, \cdot) - \pi(\cdot)}_{\TV} \le C(x)e^{-\varkappa t},\, x \in D,\, t \ge 0,
\end{equation}
and estimate the constant $\varkappa$. We do this by {\it coupling:} Take two copies $(N_1, Z_1)$ and $(N_2, Z_2)$ of this process starting from $x_1 = (n_1, z_1)$ and $x_2 = (n_2, z_2)$. Couple them (that is, construct them on the same probability space) such that the {\it coupling time} 
$$
\tau := \inf\{t \ge 0\mid N_1(t) = N_2(t),\, Z_1(t) = Z_2(t)\}
$$
satisfies $\mathbb E\left[ e^{\varkappa \tau}\right] < \infty$ for some constant $\varkappa > 0$. By the standard Lindvall inequality we get
\begin{equation}
\label{eq:coupled}
\norm{P^t(x_1, \cdot) - P^t(x_2, \cdot)}_{\TV} \le \mathbb E\left[ e^{\varkappa \tau}\right] e^{-\varkappa t},\, x \in D,\, t \ge 0.
\end{equation}
We need only to integrate~\eqref{eq:coupled} with respect to $x_2 \sim \pi$ to get~\eqref{eq:exp-ergodicity}. To obtain such a coupling, we apply the following method. We wait until the queue component hits $0$ for both copies. Thus these queue components become coupled, that is,  they are at the same point. Then we wait until: (a) either one of these queue components jumps back to $1$, or  (b) the environment components become coupled. In case of (b), we have coupled both copies. In case of (a), we have failed, and need to repeat this procedure. Each time, we succeed with positive probability (bounded from below). Thus the number of tries is dominated by a geometric distribution. 

To couple the environment components, we use the results of \cite{MyOwn12}; however, it is well-known how to find hitting time of zero by the $M/M/1$ queue \cite{Robert-book}.  
Note that assuming exponential rates of convergence of $\mathcal{A}_n$ given each queue state $n$ does not immediately imply the exponential rate of convergence of the joint process $(N,Z)$. The particular multiplicative structure we consider in $\mathcal{A}_n$ enables us to obtain exponential estimates for the coupling time constructed for the joint processes $(N,Z)$ 
 under the mild conditions imposed on $\mathcal{A}$ as well as the arrival and service rates. 

\subsection{Main statements} We impose two assumptions. The first assumes exponential bounds  on the coupling time (uniform in state variables) associated with the generator $\mathcal{A}$.

\begin{asmp} The domain $D \subseteq \mathbb R^d$ is bounded. There exist constants $\alpha > 1$ and $\gamma > 0$ such that for all $z_1, z_2 \in D$ we can couple two processes $Z_1, Z_2$ with generator $\mathcal A$, starting from $Z_1(0) = z_1$ and $Z_2(0) = z_2$, in time $\tau_{z_1, z_2} := \inf\{t \ge 0\mid Z_1(t) = Z_2(t)\}$, with 
\begin{equation}
\label{eq:coupling-bound}
\mathbb P(\tau_{z_1, z_2} \ge t) \le \alpha e^{-\gamma t}.
\end{equation}
\label{asmp:basic}
\end{asmp}

\noindent The other assumption is a stronger condition on the traffic intensity: In previous sections, we assumed it is less than $1$, but now it has to be uniformly bounded away from $1$. 

\begin{asmp} There exist constants $\overline{\lambda}, \overline{\mu} > 0$ which satisfy 
$$
\lambda(z) \le \overline{\lambda} < \overline{\mu} \le \mu(z),\quad z \in D.
$$
\label{asmp:q}
\end{asmp}
\noindent From this Assumption~\ref{asmp:q},  
\begin{equation}
\label{eq:rho-bound}
\rho(z) \le \overline{\rho} := \frac{\overline{\lambda}}{\overline{\mu}} < 1,\quad z \in D. 
\end{equation}
Next, define the function
\begin{equation}
\label{eq:m-c}
m(c) := -\overline{\lambda}c - \overline{\mu}c^{-1} + (\overline{\lambda} + \overline{\mu}),\quad c \ge 1.
\end{equation}
This function is concave, increasing on $[1, c^*]$ and decreasing on $[c^*,\infty)$, with $c^*:=\overline{\rho}^{-1/2}$, and
$$
m(1) = 0,\quad m(c^*) =  \left(\sqrt{\overline{\mu}} - \sqrt{\overline{\lambda}}\right)^{2}.
$$
Finally, define the function 
\begin{equation}
\label{eq:theta}
\theta(\alpha, \beta, \gamma, a) := \frac{a\gamma}{(a-\beta)(\beta+\gamma-a)} \alpha^{(a - \beta)/\gamma} +  \frac{\beta}{\beta - a}.
\end{equation}
for any $\alpha>1$, $\beta, \gamma>0$ and $a \ge 0$.

\begin{thm} \label{thm:main-conv}
Fix an initial condition $x_0 = (n_0, z_0) \in \mathbb Z_+\times \mathbb R_+$. Under Assumptions~\ref{asmp:basic} and~\ref{asmp:q}, for some constants $C  > 0$ and $c \in (1, c_*)$,
\begin{equation} \label{eq:explicit-pi}
\norm{P^t((n_0, z_0), \cdot) - \pi(\cdot)}_{\TV} \le C\left(1 + c^{n_0}\right)e^{-\varkappa t},\,\, t \ge 0.
\end{equation}
where we can take any $\varkappa = (1 - \varepsilon)m(c)$ for $\varepsilon \in (0, 1)$ and $c \in (1, c^*)$ such that 
\begin{equation}
\label{eq:eps-c}
c\theta(\alpha, \overline{\lambda}, \gamma, m(c)) < \Bigl(1 - \alpha^{-\overline{\lambda}/\gamma}\frac{\gamma}{\overline{\lambda} + \gamma}\Bigr)^{-\varepsilon/(1 - \varepsilon)}.
\end{equation}
\end{thm}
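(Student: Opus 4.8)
The plan is to quantify the Lindvall coupling sketched in Section~4.1. First, by~\eqref{eq:coupled} it suffices to construct a Markovian coupling of two copies $(N_1,Z_1)$, $(N_2,Z_2)$, started from $(n_0,z_0)$ and from an arbitrary $(n_2,z_2)$, whose coupling time $\tau$ satisfies $\mathbb E_{(n_0,z_0),(n_2,z_2)}[e^{\varkappa\tau}]\le C(1+c^{n_0})(1+c^{n_2})$ for $\varkappa=(1-\varepsilon)m(c)$; integrating the second initial condition against $\pi$ then gives~\eqref{eq:explicit-pi}, since $\langle\pi,c^{N}\rangle=\Xi^{-1}\sum_{n\ge0}c^{n}\int_D\rho^{n}(z)\,\nu_D(\mathrm dz)\le\Xi^{-1}\nu_D(D)(1-c\overline\rho)^{-1}<\infty$ by~\eqref{eq:invariant-measure} and~\eqref{eq:rho-bound}, using $c<c^*=\overline\rho^{-1/2}<\overline\rho^{-1}$ (replace $\nu_D$ by $v$ in the discrete case). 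The second ingredient is exponential control of the queue hitting time $T_0$ of $0$: take $g(n,z):=c^n$, note the environment part of $\mathcal L$ annihilates $g$, so for $n\ge1$, $\mathcal Lg(n,z)=c^{n}(\lambda(z)(c-1)+\mu(z)(c^{-1}-1))\le c^{n}(\overline\lambda(c-1)+\overline\mu(c^{-1}-1))=-m(c)g(n,z)$ by Assumption~\ref{asmp:q} and monotonicity in $\lambda(z),\mu(z)$, with $m(c)>0$ on $(1,c^*)$. Then $e^{m(c)(t\wedge T_0)}c^{N(t\wedge T_0)}$ is a supermartingale; letting $t\to\infty$ (the queue is non-explosive, $T_0<\infty$ a.s.\ as $\overline\lambda<\overline\mu$) and using Fatou yields $\mathbb E_{(n,z)}[e^{m(c)T_0}]\le c^n$ uniformly in $z$, and the same function controls, up to constants, the time for both queue components to be at $0$ and the recovery times below.

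Next I would carry out the coupling by rounds. Run the copies independently until $\sigma_0:=\inf\{t: N_1(t)=N_2(t)=0\}$; by the above and the strong Markov property, $\mathbb E[e^{\varkappa\sigma_0}]\le C'(1+c^{n_0})(1+c^{n_2})$. From a state with both queues at $0$, run a \emph{round}: while both queues sit at $0$ the two environments share a common generator (the scaling $\rho^{-n}$ being trivial at $n=0$), so by Assumption~\ref{asmp:basic} one may couple $Z_1,Z_2$ in a time $X$ with $\mathbb P(X\ge t)\le\alpha e^{-\gamma t}$; let $Y$ be the exponential clock at which the queue configuration first leaves $\{N_1=N_2=0\}$ and $R$ the subsequent time to return there, with $\mathbb E[e^{m(c)R}\mid\mathcal F]\le c$ by Step~2. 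If $X<Y$ the round \emph{succeeds}: then $N_1=N_2=0$ and $Z_1=Z_2$, so the full states coincide and the copies are run together thereafter; if $Y\le X$ the round \emph{fails}, and after recovery (duration $R$) we repeat. By the strong Markov property, taking suprema over environment starting points, the rounds are dominated by i.i.d.\ copies; the per-round success probability is at least $p_0:=\mathbb E[e^{-\overline\lambda X}]\ge\alpha^{-\overline\lambda/\gamma}\gamma/(\overline\lambda+\gamma)$ (integrating the tail of $X$ against $\overline\lambda e^{-\overline\lambda t}$), so the number $K$ of failed rounds is geometrically dominated, and $\mathbb E[e^{a(X\wedge Y)}]\le\theta(\alpha,\overline\lambda,\gamma,a)$ --- this is where $\theta$ from~\eqref{eq:theta} enters.

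Finally, writing $\tau=\sigma_0+\sum_{j=1}^{K}L^{\mathrm{fail}}_j+L^{\mathrm{succ}}$ and factoring over rounds by the strong Markov property, $\mathbb E[e^{\varkappa\tau}]\le\mathbb E[e^{\varkappa\sigma_0}]\,\mathbb E[e^{\varkappa L^{\mathrm{succ}}}]\sum_{k\ge0}(\mathbb E[e^{\varkappa L^{\mathrm{fail}}}\mathbf 1_{\mathrm{fail}}])^{k}$. On a failed round $L^{\mathrm{fail}}=(X\wedge Y)+R$, so Hölder's inequality applied to $(e^{\varkappa((X\wedge Y)+R)},\mathbf 1_{Y\le X})$ with exponents $1/(1-\varepsilon)$ and $1/\varepsilon$, together with $\varkappa=(1-\varepsilon)m(c)$ and independence of $R$ from $(X,Y)$, bounds $\mathbb E[e^{\varkappa L^{\mathrm{fail}}}\mathbf 1_{\mathrm{fail}}]$ by $(c\,\mathbb E[e^{m(c)(X\wedge Y)}])^{1-\varepsilon}\mathbb P(Y\le X)^{\varepsilon}\le(c\,\theta(\alpha,\overline\lambda,\gamma,m(c)))^{1-\varepsilon}(1-p_0)^{\varepsilon}$. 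The geometric series converges exactly when this is $<1$, i.e.\ $c\,\theta(\alpha,\overline\lambda,\gamma,m(c))<(1-p_0)^{-\varepsilon/(1-\varepsilon)}=(1-\alpha^{-\overline\lambda/\gamma}\gamma/(\overline\lambda+\gamma))^{-\varepsilon/(1-\varepsilon)}$, which is exactly~\eqref{eq:eps-c}; combined with the finite factors $\mathbb E[e^{\varkappa\sigma_0}]$ and $\mathbb E[e^{\varkappa L^{\mathrm{succ}}}]$ this yields~\eqref{eq:explicit-pi} with $\varkappa=(1-\varepsilon)m(c)$.

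The main obstacle is the coupling construction in the middle step: making the rounds genuinely (conditionally) i.i.d.\ Markov objects, handling that away from level $0$ the two environment components have distinct speed-scaled generators (so the coupling of Assumption~\ref{asmp:basic} can only run while both queues are at $0$), and controlling the exit clock $Y$ and recovery time $R$ so that the per-round accounting reproduces exactly the rate $\overline\lambda$, the factor $c$, and the quantity $\theta(\alpha,\overline\lambda,\gamma,m(c))$ in~\eqref{eq:eps-c}. The reduction, the Lyapunov estimate, and the Hölder/geometric-series computation are routine by comparison.
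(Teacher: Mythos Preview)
Your architecture is right and matches the paper's: reduce to a two-point coupling bound, control the queue's hitting time of $0$ via the Lyapunov function $c^{n}$ (yielding the factor $m(c)$), run rounds at level $0$ racing the environment coupling against an exit clock, and finish with H\"older plus a geometric series. Your H\"older step, applied per failed round, is a legitimate variant of the paper's supermartingale/optional-stopping argument and lands on the same condition~\eqref{eq:eps-c}.

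The gap is exactly the point you flag as the ``main obstacle,'' and it is the one idea the paper supplies that you do not. In your construction the exit clock $Y$ is the first jump of \emph{either} $N_1$ or $N_2$ from $0$; its instantaneous rate is $\lambda(Z_1(t))+\lambda(Z_2(t))$, which is neither exactly exponential nor bounded below (Assumption~\ref{asmp:q} gives no lower bound on $\lambda$). Hence you cannot invoke the technical lemma that produces $\theta(\alpha,\overline\lambda,\gamma,m(c))$, nor the success probability $p_0=\alpha^{-\overline\lambda/\gamma}\gamma/(\overline\lambda+\gamma)$, with rate exactly $\overline\lambda$; at best you get $2\overline\lambda$ (and only if you also manufacture an upper bound on $Y$, which you do not have). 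Likewise your recovery time $R$ is the first time \emph{both} queues are simultaneously at $0$ again, not a single queue's return from $1$, so the bound $\mathbb E[e^{m(c)R}]\le c$ is not justified as stated.

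The paper resolves all three issues at once by coupling \emph{both} $N_1$ and $N_2$ below a single dominating $M/M/1$ queue $\overline N$ with constant rates $\overline\lambda,\overline\mu$ and $\overline N(0)=n_1\vee n_2$. Then: (i) the time for both queues to be at $0$ is replaced by the hitting time $\tau_0$ of $0$ by $\overline N$, giving $\mathbb E[e^{m(c)\tau_0}]\le c^{n_1\vee n_2}$; (ii) while $\overline N=0$ one has $N_1=N_2=0$ automatically, and the exit clock is the \emph{single} arrival clock of $\overline N$, which is exactly $\Exp(\overline\lambda)$; (iii) after a failure $\overline N$ sits at $1$, so the recovery is $\overline N$'s return to $0$, with $\mathbb E[e^{m(c)\tau_k}]\le c$. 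With this device in hand, your remaining computations go through verbatim and yield~\eqref{eq:eps-c}.
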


The proof of the theorem is given at the end of this section. The only condition on the environment process is the Assumption~\ref{asmp:basic} on coupling time with (uniformly) exponential tail for the environment process corresponding to $N(t) = 0$. It is natural to assume this condition also holds for finite environment space. 

\smallskip

Note that there exists a $c \in (1, c^*)$ such that \eqref{eq:eps-c} is satisfied. Indeed, the left-hand side of \eqref{eq:eps-c} is continuous with respect to $c$, and is equal to $1$ for $c = 1$. Whereas the right-hand side of \eqref{eq:eps-c} is larger than one for any $\epsilon \in (0,1)$. However, to find a maximal rate of convergence, one needs to maximize $\varkappa$ over the space of two parameters $(\varepsilon, c)$ which satisfy~\eqref{eq:eps-c}. Possible values of $\varkappa$ form an interval $[0, \varkappa_*)$, which does not contain its upper endpoint; therefore, we cannot claim that $\varkappa_*$ is itself a rate of convergence. 

\smallskip

Compare this with the simple $M/M/1$ queue with constant rates: arrival rate $\overline{\lambda}$ and service rate $\overline{\mu}$, which has an exact rate of convergence 
$e^{-m(c)t}$ for $c > 0$ such that $m(c) > 0$ from~\eqref{eq:m-c}. \cite[Proposition 5.8]{Robert-book} states that the upper bound, restricting to only the queueing process, is
$$
\big(1+ \overline{\rho}^{-n/2} \big) \exp\big[ - \big( \overline{\lambda}^{1/2} - \overline{\mu}^{1/2} \big)^2 t \big]
$$
The constant in the exponent does not depend on $n$. Our result matches this rate.

After some modifications, this theorem is applicable not only for reflected diffusions from Section 2, but for discrete environment space from Section 3. Here is its version:

\begin{asmp}
There exist constants $\alpha > 1$ and $\gamma > 0$ such that for all $z_1, z_2 \in D$ we can couple two continuous-time Markov chains $Z_1, Z_2$ with common generator $\sigma(\cdot)\mathbf{T}_0$, starting from $Z_1(0) = z_1$ and $Z_2(0) = z_2$, in time $\tau_{z_1, z_2}$, such that~\eqref{eq:coupling-bound} holds.
\label{asmp:basic-new}
\end{asmp}

\begin{thm}
Under Assumptions~\ref{asmp:q} and \ref{asmp:basic-new}, the result~\eqref{eq:explicit-pi} for $(c, \varepsilon)$ satisfying~\eqref{eq:eps-c} holds. 
\label{thm:new-conv}
\end{thm}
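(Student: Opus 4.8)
The plan is to mirror the proof of Theorem~\ref{thm:main-conv} almost verbatim, replacing the reflected jump-diffusion environment by the continuous-time jump chain $Z$ on $D$ and the generator $\mathcal A$ by the nominal intensity matrix $\mathbf T_0$ (scaled by a factor when $N(t) = n$). First I would set up the coupling of two copies $(N_1, Z_1)$ and $(N_2, Z_2)$ of the joint process, started from $x_1 = (n_1, z_1)$ and $x_2 = (n_2, z_2)$, exactly as in Section~\ref{sec-rate}: run the two queue components independently until they both hit $0$ (using the well-known exponential tail bound for the hitting time of $0$ for an $M/M/1$ queue with rates bounded via Assumption~\ref{asmp:q}, so that $m(c) > 0$ governs the decay), at which point $N_1 = N_2 = 0$; then attempt to couple the two environment chains, which—while both queues sit at $0$—evolve as Markov chains with common generator $\mathbf T_0$ (up to the scalar $\rho^{-0}(z) = 1$, so literally generator $\mathbf T_0$), to which Assumption~\ref{asmp:basic-new} applies and furnishes the bound $\mathbb P(\tau_{z_1,z_2} \ge t) \le \alpha e^{-\gamma t}$. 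Either the environment chains couple before one of the queues leaves $0$ (success), or a queue jumps to $1$ first (failure, restart). This is structurally identical to the diffusion case.

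Next I would carry out the quantitative bookkeeping. The probability that, starting a fresh attempt with both queues at $0$, the environment couples before either queue departs is bounded below by a constant $p > 0$ depending on $\alpha, \gamma, \overline\lambda$ — more precisely, the competing clock for a queue to leave $0$ is $\Exp(\overline\lambda)$-dominated, and integrating the coupling tail against it yields a lower bound of the form $1 - \alpha^{-\overline\lambda/\gamma}\gamma/(\overline\lambda + \gamma)$, which is exactly the quantity appearing on the right-hand side of~\eqref{eq:eps-c}. The number of attempts is therefore stochastically dominated by a geometric random variable, and each attempt costs a random time: the time for both queues to return to $0$ (exponential moments controlled by $m(c)$ via the $M/M/1$ return-time estimate) plus the time spent in the failed coupling phase. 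Summing a geometric number of such i.i.d.\ pieces and demanding that the total coupling time $\tau$ have a finite $e^{\varkappa\tau}$-moment leads, after the same computation as in Theorem~\ref{thm:main-conv}, to the constraint~\eqref{eq:eps-c} with $\varkappa = (1-\varepsilon)m(c)$; the function $\theta(\alpha, \overline\lambda, \gamma, m(c))$ from~\eqref{eq:theta} records the exponential moment of a single attempt's duration. Then Lindvall's inequality~\eqref{eq:coupled}, integrated against $x_2 \sim \pi$ (which exists by Theorem~\ref{thm-MM1}), gives~\eqref{eq:explicit-pi} with $C(1 + c^{n_0})$, the $c^{n_0}$ term coming from the $\overline\rho^{-n_0/2}$-type cost of bringing an initial queue length $n_0$ down to $0$.

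The main point I would want to be careful about — and the only place where the argument genuinely differs from the diffusion case — is verifying that while both queue components sit at $0$, the environment components really do evolve with generator precisely $\mathbf T_0$ (not $\mathbf T_n$ for varying $n$), so that Assumption~\ref{asmp:basic-new}, which is stated only for the generator $\sigma(\cdot)\mathbf T_0$, is applicable. Here the scalar $\sigma(\cdot)$ is harmless: it is just a deterministic time change of the environment chain and does not affect the coupling-time tail in~\eqref{eq:coupling-bound} beyond a reparametrization already absorbed into the constants $\alpha, \gamma$. Since in~\eqref{Rmatrix-MM1} the environment transition rate at queue state $0$ is $\rho^{0}(z)\tau_0(z,z') = \tau_0(z,z')$, this is automatic. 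One should also note that Assumption~\ref{asmp:q} replaces the boundedness-of-$D$ hypothesis of Assumption~\ref{asmp:basic} for the queue-side estimates, while Assumption~\ref{asmp:basic-new} supplies the environment-side estimate directly; thus no compactness of $D$ is needed beyond what~\eqref{as-MM1-3} already guarantees for the existence of $\pi$. With these observations in place, every inequality in the proof of Theorem~\ref{thm:main-conv} transfers verbatim, and the conclusion~\eqref{eq:explicit-pi} holds under the constraint~\eqref{eq:eps-c}. The expected obstacle is therefore not conceptual but notational: keeping the two environment generators ($\mathbf T_0$ in the coupling phase versus the time-changed $\mathbf T_n$ in general) cleanly separated throughout.
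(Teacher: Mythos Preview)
Your proposal is correct and follows essentially the same route as the paper: the paper gives a single seven-step coupling proof (domination of both queue components by a common $M/M/1$ queue $\overline{N}$ with rates $\overline{\lambda},\overline{\mu}$, then repeated attempts to couple the environments while $\overline{N}=0$), and Theorem~\ref{thm:new-conv} is obtained by invoking Assumption~\ref{asmp:basic-new} in place of Assumption~\ref{asmp:basic} at the two spots where the environment-coupling tail is used. One small slip to fix in your write-up: the quantity $1 - \alpha^{-\overline{\lambda}/\gamma}\gamma/(\overline{\lambda}+\gamma)$ appearing in~\eqref{eq:eps-c} is the \emph{upper bound on the failure probability} $q$, not a lower bound on the success probability; the success probability $\mathbb P(\zeta_k<\eta_k)$ is bounded \emph{below} by $p=\alpha^{-\overline{\lambda}/\gamma}\gamma/(\overline{\lambda}+\gamma)$ via Lemma~\ref{lemma:tech}.
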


\subsection{On the Assumptions~\ref{asmp:basic} or~\ref{asmp:basic-new}}
Below we give examples of discrete and continuous environment processes which satisfy Assumptions~\ref{asmp:basic} or~\ref{asmp:basic-new}.

\subsubsection{Coupling of jump processes}
First, let us start with discrete-space Markov chains. The relation between coupling times and mixing times (for $P^t(x, \cdot)$ to converge within a fixed $\TV$ distance from the stationary distribution) is partially explored in \cite{Finite}.  
There is a lot of existing literature on mixing times. For example, an extensive treatment of mixing times is given by \cite{PerezBook}. The literature on coupling times is sparse. Much of  the existing research is focused on $\gamma$ from Assumption~\ref{asmp:basic}, see for example \cite{BurdzyKendall}, but we need to know both $\alpha$ and $\gamma$. We could not find articles which estimate both of them. Thus we present an elementary result, which we hope will be useful. The proof is in the Appendix.

\begin{lemma} \label{lem-coupling-ex1}
Take a pure jump Markov process on the state space $D$ (finite, countable, or a domain in $\mathbb R^d$) such that the family of jump measures $(\nu(x, \cdot))_{x \in D}$ obeys 
$$
\Lambda := \sup\limits_{x \in D}\lambda(x),\quad \lambda(x) := \nu(x, D),\quad x \in D,
$$
and the family of probability measures 
$$
\overline{\nu}(x, \cdot) := \frac1{\Lambda}\nu(x, \cdot) + \frac{\Lambda - \lambda(x)}{\Lambda}\delta_{\{x\}},\quad x \in D,
$$
satisfies the following condition:
\begin{equation}
\label{eq:condition-q}
q := \sup\limits_{x, y \in D}\norm{\overline{\nu}(x, \cdot) - \overline{\nu}(y, \cdot)}_{\TV} < 1.
\end{equation}
Then the coupling times $\tau_{x, y}$ satisfy the following uniform estimate:
$$
\mathbb P(\tau_{x, y} \ge t) \le \exp\big(-(1 - q)\Lambda t\big).
$$
\label{lemma:pure-jump}
\end{lemma}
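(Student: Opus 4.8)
The plan is to build an explicit Markovian coupling of the two pure jump processes $X, Y$ on $D$ and then estimate the tail of the first meeting time directly. First I would pass to the \emph{uniformized} description: since $\Lambda := \sup_x \lambda(x) < \infty$, each process can be realized as follows. Run a Poisson clock of rate $\Lambda$; at each clock ring, if the process is currently at $x$, it ``attempts a jump'' by sampling from $\overline{\nu}(x, \cdot)$ (which, by construction, with probability $(\Lambda - \lambda(x))/\Lambda$ proposes staying at $x$). One checks that this has exactly the generator of the original jump process, because the effective jump rate out of $x$ is $\Lambda \cdot (1 - (\Lambda-\lambda(x))/\Lambda) \cdot (\text{law of the genuine part}) = \nu(x, \cdot)$. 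So it suffices to couple the two uniformized chains.

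The coupling is the natural one. Use a \emph{common} Poisson clock of rate $\Lambda$ for both $X$ and $Y$, so rings happen simultaneously. At a ring, if $X$ and $Y$ are already at the same point, use one sample from $\overline{\nu}(x,\cdot)$ for both — they stay together forever after. If $X = x \ne y = Y$, use a maximal coupling of the two probability measures $\overline{\nu}(x, \cdot)$ and $\overline{\nu}(y, \cdot)$: with probability $1 - \norm{\overline{\nu}(x,\cdot) - \overline{\nu}(y,\cdot)}_{\TV}$ both land at the same (random) point, and otherwise they land at two points according to the residual measures. By \eqref{eq:condition-q}, the success probability at each ring is at least $1 - q > 0$, uniformly in $x, y$. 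Hence the number of clock rings before the two processes meet is stochastically dominated by a geometric random variable with success parameter $1 - q$.

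Now I translate this into a continuous-time tail bound. The meeting time $\tau_{x,y}$ is at most the $G$-th arrival time of the rate-$\Lambda$ Poisson process, where $G \sim \mathrm{Geom}(1-q)$ is independent of the Poisson process. Equivalently, by the thinning property of Poisson processes, the arrival times of ``successful'' rings form a Poisson process of rate $(1-q)\Lambda$, and $\tau_{x,y}$ is at most the first arrival time of \emph{that} process. Therefore $\mathbb P(\tau_{x,y} \ge t) \le \mathbb P(\text{no successful ring in } [0,t]) = \exp(-(1-q)\Lambda t)$, which is precisely the claimed uniform estimate. (One should be slightly careful that $\tau_{x,y}$, defined as the first time $X(t) = Y(t)$, is no later than the first successful-ring time; this is immediate since the processes are piecewise constant between rings and agree from the first successful ring onward.)

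The only genuinely delicate point — and the one I would write out carefully — is verifying that the uniformized construction really does reproduce the original jump dynamics, including the edge case where $\lambda(x) = 0$ for some $x$ (absorbing states), and making sure the maximal coupling at each ring can be chosen measurably in $(x,y)$ so that the joint process is a bona fide Markov process on $D \times D$; this is routine but is where the ``pure jump'' hypothesis and finiteness of $\Lambda$ are actually used. Everything else is the standard geometric-number-of-trials argument combined with Poisson thinning, so I would keep it brief.
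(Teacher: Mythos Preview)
Your argument is correct and essentially identical to the paper's: both uniformize with a common rate-$\Lambda$ Poisson clock, apply a maximal coupling of $\overline{\nu}(x,\cdot)$ and $\overline{\nu}(y,\cdot)$ at each ring, and dominate the number of rings until coalescence by a $\mathrm{Geom}(1-q)$ variable. The only cosmetic difference is the last step: the paper computes the MGF of a geometric sum of i.i.d.\ $\mathrm{Exp}(\Lambda)$ variables to identify it as $\mathrm{Exp}((1-q)\Lambda)$, whereas you invoke Poisson thinning --- both routes yield the same tail bound.
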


\begin{rmk}
The same result is true if the process is a reflected jump-diffusion with jump measures satisfying conditions of Lemma~\ref{lemma:pure-jump}. 
\end{rmk}

\begin{exm} The condition~\eqref{eq:condition-q} is not true if at least two measures $\overline{\nu}(x, \cdot)$ and $\overline{\nu}(y, \cdot)$ are mutually singular; that is, there exists a set $D_0 \subseteq D$ such that $\overline{\nu}(x, D_0) = 0$ but 
$\overline{\nu}(y, D_0) = 1$. Indeed, we then have 
$$
\norm{\nu(x, \cdot) - \nu(y, \cdot)}_{\TV} \ge |\nu(x, D_0) - \nu(y, D_0)| = 1.
$$
\end{exm}

\begin{exm}
Assume that for all $x \in D$, $\nu(x, \cdot) \ll \mu(\cdot)$ for some $\sigma$-finite Borel measure $\mu$ on $D$. It can be the Lebesgue measure if $D$ is a domain in $\mathbb R^d$, or the counting measure for discrete $D$. Define the Radon-Nikodym derivative
$$
f(x, z) := \frac{\mathrm{d}\nu(x, \cdot)}{\mathrm{d}\mu(\cdot)}(z).
$$
Then condition~\eqref{eq:condition-q} is equivalent to 
$$
\sup\limits_{x, y \in D}\int_D|f(x, z) -  f(y, z)|\mathrm{d}\mu(z) = q < 1.
$$
For example, take a finite $D$ (with $m$ elements). Let $\mu$ be the counting measure, then $\nu(\cdot, \cdot)$ can be given by an $m\times m$ matrix $(\nu_{ij})$ (with zero diagonal elements). Each $i^{\rm th}$ row gives Radon-Nikodym derivative of $\nu(i, \cdot)$ with respect to $\mu$. Thus we obtain
$$
q := \max\limits_{i, j = 1, \ldots, m}\sum_{k=1}^m|\nu_{ik} - \nu_{jk}|.
$$
\end{exm}

\subsubsection{Coupling of reflected diffusions} Now consider a reflected diffusion on $[0, a]$. It is stochastically ordered, so every $\tau_{x, y}$ is stochastically dominated by $\mathcal T$: hitting time of $a$ starting from $0$. Thus
$$
\mathbb P(\tau_{x, y} \ge t) \le \mathbb P(\mathcal T \ge t).
$$
Let us estimate the tail of $\mathcal T$. Take a non-reflected diffusion $Z^*= \{Z^{*}(t):\, t \ge 0\}$ on the real line, with drift and diffusion coefficients
$$
g^*(x) = 
\begin{cases}
g(x),\, x \ge 0,\\
-g(-x),\, x < 0,
\end{cases}
\quad
\sigma^*(x) = \sigma(|x|),\quad x \in \mathbb R.
$$
Let $\mathcal T^* := \inf\{t \ge 0: |Z^*(t)| = a\}$. Then the laws of $Z(\cdot\wedge \mathcal T)$ and $Z^*(\cdot\wedge\mathcal T^*)$ are the same, and the laws of $\mathcal T$ and $\mathcal T^*$ are the same. Thus we have  reduced this to tail estimation for an exit time of a diffusion process from a strip $[-a, a]$. 

Denote by $u^*(t, x)$ the probability that $Z^*$ stays in $(-a, a)$ until at least time $t$, if $Z^*(0) = x$. Denote by $G(t, x, y)$ the transition density of this diffusion killed at $\pm a$, otherwise known as Green's function (or heat kernel) of the infinitesimal generator $\mathcal A^*$ of $Z^*$. Then the function $u^*$ satisfies the initial-boundary value problem
$$
\frac{\partial u^*}{\partial t} = \mathcal A^*u^*,\quad t \ge 0,\quad -a < x < a,
$$
with initial and boundary conditions $\left.u^*\right|_{t=0} = 1$ and 
$\left.u\right|_{x = \pm a} = 0$. Thus we can express 
$$
u^*(t, x) = \int_{-a}^aG(t, x, y)\,\mathrm{d}y.
$$
Knowing spectral decomposition of $G$ gives us the exponent in \eqref{eq:coupling-bound}. To find the constant $A$ is a little harder, since it requires some information on the function $G$ itself, or its eigenvalues. In some simple cases, however, it can be found explicitly. For example, for a RBM $Z$ on $[0, a]$, the process $Z^*$ is also a Brownian motion, and \cite[Chapter 2, Problem 8.2]{KSBook} gives us an exact estimate. 

\subsection{Proof of Theorem~\ref{thm:new-conv}} We proceed in seven steps. 

\smallskip

 {\it Step 1.} It suffices to prove the following version of~\eqref{eq:coupled}: For $(n_1, z_1), (n_2, z_2) \in \mathbb Z_+\times D$,
\begin{equation}
\label{eq:explicit}
\norm{P^t((n_1, z_1), \cdot)  - P^t((n_2, z_2), \cdot)}_{\TV} \le C_*\left(c^{n_1} + c^{n_2}\right)e^{-\varkappa t},\quad t \ge 0,
\end{equation}
for some constant $C_*$ (which will be determined below). 
Indeed, then we can rewrite~\eqref{eq:explicit} as follows: For every Borel subset $A \subseteq \mathbb Z_+\times D$, 
\begin{equation}
\label{eq:explicit-version}
\left|P^t((n_1, z_1), A) - P^t((n_2, z_2), A)\right| \le C_*\left(c^{n_1} + c^{n_2}\right)e^{-\varkappa t}.
\end{equation}
Integrate~\eqref{eq:explicit-version} with respect to $(n_2, z_2) \sim \pi$. Note that the function $(n, z) \mapsto c^n$ is integrable with respect to $\pi$. Indeed, this integral is equal to 
$$
\Xi^{-1}\sum\limits_{n=0}^{\infty}\int_Dc^n\rho^n(z)\nu_D(\mathrm{d}z).
$$
From~\eqref{eq:rho-bound}, $\nu_D(D) = 1$, and $c < c_* = \overline{\rho}^{-1/2}$, 
\begin{equation}
\label{eq:integrate}
\sum\limits_{n=0}^{\infty}\int_Dc^n\rho^n(z)\nu_D(\mathrm{d}z) \le \sum\limits_{n=0}^{\infty}\overline{\rho}^{n/2} = (1 - \overline{\rho}^{1/2})^{-1} < \infty.
\end{equation}
Combining~\eqref{eq:explicit-version} and~\eqref{eq:integrate}, we get~\eqref{eq:explicit-pi}.

\smallskip

{\it Step 2.} To get~\eqref{eq:explicit}, we use {\it coupling:} As explained in the beginning of this section, we take on the same filtered probability space two copies $X_1 = (N_1, Z_1)$ and $X_2 = (N_2, Z_2)$ of this queue, starting from $x_1 = (n_1, z_1)$ and $x_2 = (n_2, z_2)$. Assume $\tau\equiv \tau(x_1,x_2)$ is a stopping time such that $X_1(t) = X_2(t)$ for $t \ge \tau$ a.s. Then $\tau$ is called a {\it coupling time}. For every $t \ge 0$ and a function $f : \mathbb Z_+ \times D \to \mathbb R$ with $|f| \le 1$, we can write
\begin{align}
\label{eq:coupling-technique}
\begin{split}
&\left|\mathbb E f(X_1(t)) - \mathbb E f(X_2(t))\right|  \le 
\left|\mathbb E\left[f(X_1(t))1_{\{\tau \le t\}}\right] - \mathbb E\left[f(X_2(t))1_{\{\tau \le t\}}\right]\right| \\ & \qquad \qquad\qquad\qquad\qquad\qquad + \left|\mathbb E\left[f(X_1(t))1_{\{\tau > t\}}\right] - \mathbb E\left[f(X_2(t))1_{\{\tau > t\}}\right]\right|  \le 2\mathbb P(\tau > t).
\end{split}
\end{align}
In other words, we get the classic Lindvall inequality
\begin{equation}
\label{eq:Lindvall}
\left|\mathbb E f(X_1(t)) - \mathbb E f(X_2(t))\right|  \le 2\mathbb P(\tau > t).
\end{equation}
Next, assuming that we prove that 
$\mathbb E e^{\varkappa\tau} < \infty$, then 
\begin{equation}
\label{eq:Markov}
\mathbb P(\tau > t) \le e^{-\varkappa t}\cdot\mathbb E e^{\varkappa\tau}. 
\end{equation}
Combining~\eqref{eq:Lindvall} with~\eqref{eq:Markov}, we get~\eqref{eq:explicit-version}. In the proof below, we shall see that the constant before $e^{-\varkappa t}$ turns out to be of the same form as required in~\eqref{eq:explicit-version}.

\smallskip

{\it Step 3.} Let us now describe the coupling in detail. 

\smallskip

(a) First, we couple the queue components. Both $N_1$ and $N_2$ are stochastically dominated by $\overline{N}$, which is described as the $M/M/1$ queue with arrival rate $\overline{\lambda}$ and service rate $\overline{\mu}$, starting from $\overline{N}(0) =  n_1\vee n_2$. Therefore, we can take copies of $N_1, N_2, \overline{N}$ such that 
\begin{equation}
\label{eq:stoch-domination-N}
N_1(t) \le \overline{N}(t)\ \mbox{and}\ N_2(t) \le \overline{N}(t), \quad t \ge 0. 
\end{equation}
From~\eqref{eq:stoch-domination-N} it follows that for $\tau_0 := \inf\{t\ge 0\mid \overline{N}(t) = 0\}$, we have $N_1(\tau_0) = N_2(\tau_0) = 0$. 

\smallskip

(b) At $\tau_0$, we start two competing clocks. The first one is an exponential clock $\eta_0 \sim \Exp(\overline{\lambda})$, which measures the time until arrival of the process $\overline{N}$ to $1$ from $0$. The second one is $\zeta_0$, a coupling time of $Z_1(\tau_0+\cdot)$ and $Z_2(\tau_0 + \cdot)$. This time $\zeta_0$ exists by Assumption~\ref{asmp:basic}, since these two processes are copies of the environment process with generator $\mathcal A$ (recall $\beta_0 = 1$) starting from $Z_1(\tau_0)$ and $Z_2(\tau_0)$, respectively.  At least (importantly for us here), this is true until $\eta_0$, when those drift and diffusion coefficients change. 

\smallskip

(c) If $\zeta_0 < \eta_0$, then $Z_1$ and $Z_2$ have time to couple while $\overline{N}(t) = 0$. By stochastic domination, $N_1(t) = N_2(t) = 0$. Thus $S_0 := \tau_0 + \zeta_0$ is a coupling time for $X_1$ and $X_2$. 

\smallskip

(d) If, however, $\zeta_0 \ge \eta_0$, then the coupling did not work. The process $\overline{N}$ has jumped at time $\tau_0 + \eta_0$ back to $1$, and we need to repeat this procedure. Let 
$$
\tau_1 := \inf\{t \ge 0\mid \overline{N}(t + \tau_0 + \eta_0) = 0\},\quad \eta_1 \sim \Exp(\overline{\lambda}).
$$
Let $\zeta_1$ be a coupling time of $Z_1(\tau_1 + \tau_0 + \eta_0 + \cdot)$ and $Z_2(\tau_1 + \tau_0 + \eta_0 + \cdot)$. If $\zeta_1 < \eta_1$, then for $S_1 := \tau_0 + \eta_0 + \tau_1 + \zeta_1$ we have $\overline{N}(S_1) = 0$, 
and thus $N_1(S_1) = N_2(S_1) = 0$. But since $\zeta_1$ is also a coupling time for environment components, 
$Z_1(S_1) = Z_2(S_1)$. Thus $S_1$ is a coupling time for $(N_1, Z_1)$ and $(N_2, Z_2)$.  

\smallskip 

(e) If $\zeta_1 \ge \eta_1$, then this coupling did not work, and we need to repeat this procedure, with $\zeta_2, \eta_2, S_2$, and so on. Let $\mathcal J := \min\{j \ge 0\mid \zeta_j < \eta_j\}$. Then the ultimate coupling time is
\begin{equation}
\label{eq:ultimate-coupling}
\tau := \sum_{j=0}^{\mathcal J-1}(\tau_j + \eta_j) + \tau_{\mathcal J} + \zeta_{\mathcal J} = \sum\nolimits_{j=0}^{\mathcal J}(\tau_j + \eta_j\wedge\zeta_j) = S_{\mathcal J},
\end{equation}
where we define the following random times:
\begin{equation}
\label{eq:ultimate-notation}
S_k := \sum_{j=0}^k\xi_j,\quad \xi_k := \tau_k + \zeta_k\wedge\eta_k,\quad k \in \mathbb Z_+.
\end{equation}
Next, we estimate the MGF of $\tau$ from~\eqref{eq:ultimate-coupling}. 

\smallskip

{\it Step 4.}  First, we estimate the MGF for each $\tau_k$. The generator of $\overline{N}$ is
$$
\overline{\mathcal M}f(n) = \overline{\lambda}(f(n+1) - f(n)) + \overline{\mu}1_{\{n \ne 0\}}(f(n-1) - f(n)).
$$
Therefore, letting $f(n) = c^n$ for a constant $c > 1$, we get
$$
\overline{\mathcal M}f(n) = -m(c)f(n),\,  \quad n \ge 1,
$$
with the constant $m(c)$ defined in \eqref{eq:m-c}. 
The following process
$$
L(t) := c^{\overline{N}(t\wedge\tau_0)} + m(c)\int_0^{t\wedge\tau_0}c^{\overline{N}(s)}\,\mathrm{d}s,\, \quad t \ge 0,
$$
is a local supermartingale, because the function $W_N : n \mapsto c^n$ satisfies 
$$
\overline{\mathcal M}W_N(n) \le -m(c)W_N(n),\,\quad  n = 1, 2, \ldots. 
$$
In the terminology of \cite[Section 4]{MyOwn12}, this is a {\it modified Lyapunov function} for $\overline{N}$. Then the derivation is similar to \cite[Section 5]{MyOwn12}. By Fatou's lemma, $L$ is a true supermartingale. Let
$$
L_*(t) := \int_0^te^{m(c)s}\,\mathrm{d}L(s),\, 
\quad t \ge 0.
$$
Because  $e^{ms} \ge 0$, this process is also a supermartingale. Consider the process
$$
L^*(t) := e^{m(c)(t\wedge\tau_0)}c^{\overline{N}(t\wedge\tau_0)}, \quad t\ge 0. 
$$
By an elementary calculation, $\mathrm{d}L^*(t) = \mathrm{d}L_*(t)$. Therefore $L^*(t) = L_*(t) + \mathrm{const}$, and $L^*$ is itself a supermartingale. Thus, for every $t \ge 0$, 
\begin{equation}
\label{eq:supermart}
\mathbb E\left[e^{m(c)(t\wedge\tau_0)}c^{\overline{N}(t\wedge\tau_0)}\right] \le \mathbb E c^{\overline{N}(0)}.
\end{equation}
Let $t \to \infty$ in~\eqref{eq:supermart}. By Fatou's lemma with the observation that $\overline{N}(\tau_0) = 0$, we get
\begin{equation}
\label{eq:MGF-tau}
\mathbb E e^{m(c)\tau_0} \le c^{n_1\vee n_2}.
\end{equation}
Similarly to~\eqref{eq:MGF-tau}, we get estimates for the MGFs  of $\tau_1, \tau_2, \ldots$, with the difference that the initial state becomes $1$ instead of $n_1\vee n_2$. Therefore, 
\begin{equation}
\label{eq:mgf-tau}
\mathbb E e^{m(c)\tau_k} \le c,\quad k = 1, 2, \ldots. 
\end{equation}

\smallskip

{\it Step 5.} 
By Assumption~\ref{asmp:basic}, we have $P(\zeta_k > t) \le \alpha e^{-\gamma t}$ for $t>0$, and recall that $\eta_k \sim \Exp(\bar{\lambda})$. Also, $\zeta_k$ and $\eta_k$ are independent.  Thus, by Lemma~\ref{lemma:tech}, we have
 for all $k \in \mathbb Z_+$, 
$$
\mathbb P(\zeta_k \le \eta_k) \le  \frac{\gamma}{\overline{\lambda}+\gamma}\alpha^{-\overline{\lambda}/\gamma} =: p.
$$
 Thus  the number of `tries', $\mathcal J$, is stochastically dominated by a geometric random variable $\widetilde{\mathcal J}$, which is the number of trials that one needs to get to the first success if the probability of success of each trial is $p$. It has the distribution and generating function (with $q := 1-p$)
\begin{equation}
\label{eq:Geo}
\mathbb P(\widetilde{\mathcal J}= n) = pq^{n-1},\, n = 1, 2, \ldots, \qandq  \mathbb E\left[s^{\widetilde{\mathcal J}}\right] = \frac{ps}{1 - qs},\ s \in [0, q^{-1}).
\end{equation}

\smallskip

{\it Step 6.} Let us estimate the MGF of $\xi_k$, defined in \eqref{eq:ultimate-notation}. 
 By Assumption~\ref{asmp:basic} and Lemma~\ref{lemma:tech} applied to $a := m(c)$ for $c \in [1, c_*]$, 
\begin{equation}
\label{eq:mgf-min}
\mathbb E\left[e^{m(c)(\zeta_k\wedge\eta_k)}\right] \le \theta(\alpha, \overline{\lambda}, \gamma, m(c)).
\end{equation}
The expression for $\theta(\alpha, \beta, \gamma, a)$ is given in~\eqref{eq:theta}. Combining~\eqref{eq:mgf-tau} and~\eqref{eq:mgf-min}, we get
$$
\mathbb E\left[e^{m(c)\xi_k}\right] \le c\theta(\alpha, \overline{\lambda}, \gamma, m(c)) =: \kappa(c),\quad k = 1, 2, \ldots
$$
The same holds if we do conditional expectation
\begin{equation}
\label{eq:one-term}
\mathbb E\left[e^{m(c)\xi_k}\mid \mathcal F_{S_{k-1}}\right] \le c\theta(\alpha, \overline{\lambda}, \gamma, m(c)) ,\quad k = 1, 2, \ldots
\end{equation}
Combining~\eqref{eq:MGF-tau} and~\eqref{eq:mgf-min}, we get
\begin{equation}
\label{eq:first-term}
\mathbb E\left[e^{m(c)\xi_k}\right] \le c^{n_1\vee n_2}\theta(\alpha, \overline{\lambda}, \gamma, m(c)).
\end{equation}

\smallskip

{\it Step 7.} Finally, recall~\eqref{eq:ultimate-coupling}.
We estimate from above the MGF for appropriate $\varkappa > 0$: $\mathbb E\left[e^{\varkappa S_{\mathcal J}}\right] = \mathbb E\left[e^{\varkappa \tau}\right]$. By~\eqref{eq:one-term}, the process $(M_k)_{k \in \mathbb Z_+}$ defined by 
$$
M_k := \exp\big(m(c)S_k - k\ln\kappa(c)\big),\, \quad k \in \ZZ_+,
$$
is an $(\mathcal F_{S_k})_{k \in \mathbb Z_+}$-supermartingale. It is positive, and $\mathcal J$ is an $(\mathcal F_{S_k})_{k \in \mathbb Z_+}$-stopping time. Applying the optional stopping theorem and using~\eqref{eq:first-term}, we obtain
\begin{equation}
\label{eq:comparison-of-expectations}
\mathbb E\left[M_{\mathcal J}\right] \le \mathbb E[M_0] = \mathbb E\left[e^{m(c)\xi_0}\right] = c^{n_1\vee n_2}\theta(\alpha, \overline{\lambda}, \gamma, m(c)).
\end{equation}

By H{\"o}lder's inequality, 
\begin{align}
\label{eq:Holder}
\begin{split}
\mathbb E&\left[\exp\left((1 - \varepsilon)m(c)S_{\mathcal J}\right)\right] \\ & \le \left(\mathbb E\left[e^{m(c)S_{\mathcal J} - \mathcal J\ln\kappa(c)}\right]\right)^{1 - \varepsilon}\cdot\bigl(\mathbb E\bigl[\exp\left(\mathcal J((1-\varepsilon)/\varepsilon)\ln\kappa(c)\right)\bigr]\bigr)^{\varepsilon} \\ & =  \left(\mathbb E\left[M_{\mathcal J}\right]\right)^{1 - \varepsilon}
\mathbb E\left[\kappa(c)^{(1-\varepsilon)\mathcal J/\varepsilon}\right].
\end{split}
\end{align}
Since $\kappa(c) > 0$ for $c \in [1, c_*]$, and $\mathcal J$ is stochastically dominated by a geometric random variable $\widetilde{\mathcal J}$ as in~\eqref{eq:Geo}, we have
\begin{equation}
\label{eq:domination-geo}
\mathbb E\left[\kappa(c)^{(1-\varepsilon)\mathcal J/\varepsilon}\right] \le \mathbb E\left[\kappa(c)^{(1 - \varepsilon)\widetilde{\mathcal J}/\varepsilon}\right] = \frac{p\kappa(c)^{(1-\varepsilon)/\varepsilon}}{1 - \kappa(c)^{(1-\varepsilon)/\varepsilon}q}.
\end{equation}
Here we require that $\kappa(c)^{(1-\varepsilon)/\varepsilon} < q^{-1}$, which is exactly the condition for $c$ in \eqref{eq:eps-c}. Combining~\eqref{eq:comparison-of-expectations}, ~\eqref{eq:Holder} and ~\eqref{eq:domination-geo}, we get 
\begin{align}
\label{eq:final-eq}
\begin{split}
\mathbb E\left[\exp\left((1 - \varepsilon)m(c)S_{\mathcal J}\right)\right] &
 \le c^{(1-\epsilon) (n_1\vee n_2)}\theta(\alpha, \overline{\lambda}, \gamma, m(c))^{1-\epsilon}  \frac{p\kappa(c)^{(1-\varepsilon)/\varepsilon}}{1 - \kappa(c)^{(1-\varepsilon)/\varepsilon}q} \nonumber \\
& = C_* c^{(1-\epsilon) [(n_1\vee n_2) -1]} < C_* c^{n_1\vee n_2}  \le C_*(c^{n_1} + c^{n_2}),\\
  C_* &:=\frac{p\kappa(c)^{(1-\varepsilon)(1/\varepsilon + 1)}}{1 - \kappa(c)^{(1-\varepsilon)/\varepsilon}q}. 
\end{split}
\end{align}
From~\eqref{eq:ultimate-coupling}, this completes the proof of~\eqref{eq:explicit} for $\varkappa := (1 - \varepsilon)m(c)$, and Theorem~\ref{thm:main-conv}.

\section{Appendix}
\label{sec-appendix}
\subsection{Proof of Lemma \ref{lem-coupling-ex1}}
Alternatively we can describe such pure jump process $X = (X(t),\, t \ge 0)$ as follows: Run an exponential clock $\eta_1 \sim \Exp(\Lambda)$, and then let $X(t) = X(0)$ for $t < \eta_1$, and $X(\eta_1) \sim \overline{\nu}(X(0), \cdot)$ (independently of $\eta_1$). Run another exponential clock $\eta_2 \sim \Exp(\Lambda)$ independent of those random variables, then $X(S_2), Y(S_2)$ with $S_2 := \eta_1 + \eta_2$, and repeat the process. Thus we couple these processes $X = \{X(t):\, t \ge 0\}$ and $Y = \{Y(t):\, t \ge 0\}$ starting from $X(0) = x$ and $Y(0) = y$ as follows: We use the same exponential clocks $\eta_1, \eta_2, \ldots$, and couple $X(S_k)$ and $Y(S_k)$ with $S_k := \eta_1 + \ldots + \eta_k$, using the {\it maximal coupling} from \cite[Proposition 4.7]{PerezBook}: 
\begin{equation}
\label{eq:max-coupling}
\mathbb P\left(X(\eta_k) \ne Y(\eta_k)\mid \mathcal F_{S_{k-1}}\right) = \norm{\overline{\nu}(X(\eta_{k-1}, \cdot) -  \overline{\nu}(Y(\eta_{k-1}, \cdot))}_{\TV},\quad k = 1, 2, \ldots
\end{equation}
The coupling time then becomes 
\begin{equation}
\label{eq:tau-representation}
\tau_{x, y} := S_{\mathcal J},\quad \mathcal{J} := \min\{k \ge 1:  X(\eta_k) = Y(\eta_k)\}.
\end{equation}
Combining~\eqref{eq:condition-q} and~\eqref{eq:max-coupling}, we get
\begin{equation}
\label{eq:less-than-q}
\mathbb P\left(X(\eta_k) = Y(\eta_k)\mid \mathcal F_{S_{k-1}}\right) \ge p := 1-q,\quad k = 1, 2, \ldots. 
\end{equation}
Therefore, $\mathcal J$ is stochastically dominated by a geometric random variable $\tilde{\mathcal J}$ (the number of tries until the first success in a sequence of independent Bernoulli trials with individual success probability $p$), independent of $\eta_1, \eta_2, \ldots$. From~\eqref{eq:tau-representation} we get
\begin{equation}
\label{eq:comparison}
\tau_{x, y} \preceq \eta_1 + \ldots + \eta_{\tilde{\mathcal J}} =: \tilde{S}. 
\end{equation}
The MGF of each of these exponential random variables is 
$$
\mathbb E\left[ e^{u\eta_k}\right] = \frac{\Lambda}{\Lambda - u},\quad u < \Lambda,
$$
and the generating function for this geometric random variable is 
$$
\mathbb E\big[s^{\tilde{\mathcal J}}\big] = \frac{ps}{1 - qs},\quad s < q^{-1}.
$$
Therefore, the MGF for $\tilde{S}$ from the right-hand side of~\eqref{eq:comparison} is the composition: 
$$
\mathbb E\big[e^{u\tilde{S}}\big] = \frac{p\frac{\Lambda}{\Lambda - u}}{1 - q\frac{\Lambda}{\Lambda - u}} = \frac{p\Lambda}{p\Lambda - u}.
$$
Thus $\tilde{S} \sim \Exp(p\Lambda)$, and it satisfies $\mathbb P(\tilde{S} \ge t) \le e^{-p\Lambda t}$. The rest is trivial.

\subsection{A technical comparison lemma} 

\begin{lemma} Fix constants $\alpha > 1$, $\beta, \gamma > 0$. Take two independent random variables $\xi \sim \Exp(\beta)$ and $\eta > 0$ which satisfies
$\mathbb P(\eta > u) \le \alpha e^{-\gamma u}$ for $u \ge 0$. Then 
\begin{equation}
\label{eq:probab}
\mathbb P(\eta < \xi) \ge \alpha^{-\beta/\gamma}\frac{\gamma}{\beta + \gamma}.
\end{equation}
For $a \in [0, \beta + \gamma)$, the moment generating function for $\xi\wedge\eta$ satisfies
\begin{equation}
\label{eq:MGF-est}
\mathbb E\left[e^{a(\xi\wedge\eta)}\right] \le \theta(\alpha, \beta, \gamma, a),
\end{equation}
where the function $\theta$ is defined in~\eqref{eq:theta}. 
\label{lemma:tech}
\end{lemma}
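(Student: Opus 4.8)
The plan is to prove both inequalities of Lemma~\ref{lemma:tech} by the same elementary scheme: express the quantity of interest as an integral of the tail $\mathbb P(\eta > t)$ against an exponential weight, bound that tail by $\min\{1,\alpha e^{-\gamma t}\}$, and split the resulting integral at the crossover point $t_0 := \gamma^{-1}\ln\alpha$, at which $\alpha e^{-\gamma t_0} = 1$. Everything then collapses because of the identities $e^{-\beta t_0} = \alpha^{-\beta/\gamma}$ and $\alpha e^{(a-\beta-\gamma)t_0} = \alpha^{(a-\beta)/\gamma}$.

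For~\eqref{eq:probab}, I would first condition on $\eta$: since $\xi \sim \Exp(\beta)$ is independent of $\eta$,
\[
\mathbb P(\eta < \xi) = \mathbb E\bigl[\mathbb P(\xi > \eta \mid \eta)\bigr] = \mathbb E\bigl[e^{-\beta\eta}\bigr].
\]
Then, writing $e^{-\beta\eta} = 1 - \beta\int_0^\eta e^{-\beta t}\,\mathrm dt$ and using Tonelli, $\mathbb E[e^{-\beta\eta}] = 1 - \beta\int_0^\infty e^{-\beta t}\,\mathbb P(\eta > t)\,\mathrm dt$. Inserting $\mathbb P(\eta > t) \le \min\{1,\alpha e^{-\gamma t}\}$ and splitting the integral at $t_0$, the two resulting elementary integrals collapse to exactly $\alpha^{-\beta/\gamma}\gamma/(\beta+\gamma)$, which is the asserted lower bound.

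For~\eqref{eq:MGF-est}, the plan is to use the companion identity, valid for a nonnegative random variable $Y$ and $a \ge 0$, namely $\mathbb E[e^{aY}] = 1 + a\int_0^\infty e^{at}\,\mathbb P(Y > t)\,\mathrm dt$ (again Tonelli), applied to $Y = \xi\wedge\eta$ with $\mathbb P(\xi\wedge\eta > t) = e^{-\beta t}\,\mathbb P(\eta > t)$ by independence. The same tail bound and the same split at $t_0$ give
\[
\mathbb E\bigl[e^{a(\xi\wedge\eta)}\bigr] \le 1 + a\Bigl(\int_0^{t_0} e^{(a-\beta)t}\,\mathrm dt + \alpha\int_{t_0}^\infty e^{(a-\beta-\gamma)t}\,\mathrm dt\Bigr),
\]
where convergence of the second integral is precisely where the hypothesis $a < \beta+\gamma$ enters. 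Evaluating the two integrals and regrouping --- the coefficients of $\alpha^{(a-\beta)/\gamma}$ combine through $\tfrac1{a-\beta} + \tfrac1{\beta+\gamma-a} = \tfrac{\gamma}{(a-\beta)(\beta+\gamma-a)}$, and the constant terms combine as $1 - \tfrac{a}{a-\beta} = \tfrac{\beta}{\beta-a}$ --- reproduces exactly $\theta(\alpha,\beta,\gamma,a)$ from~\eqref{eq:theta}.

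The computation is routine and I do not anticipate a substantive obstacle; the only two care points are (i) the borderline value $a = \beta$, where the closed form for $\theta$ has a removable singularity --- this is handled either by evaluating $\int_0^{t_0}e^{(a-\beta)t}\,\mathrm dt = t_0$ directly and passing to the limit, or simply by continuity of both sides in $a$; and (ii) resisting the temptation to optimize over a free threshold instead of splitting at the exact crossover $t_0$, since it is the crossover split that makes the constants telescope into the clean closed forms claimed.
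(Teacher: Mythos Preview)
Your proposal is correct. For~\eqref{eq:probab}, your computation and the paper's are the same in substance: both reduce to bounding $\int_0^\infty \beta e^{-\beta t}\,\mathbb P(\eta > t)\,\mathrm dt$ by $\mathbb P(\eta > t) \le \min\{1,\alpha e^{-\gamma t}\}$ and splitting at $t_0 = \gamma^{-1}\ln\alpha$ (you reach the integral by conditioning on $\eta$, the paper by conditioning on $\xi$, but the integrand is identical).

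For~\eqref{eq:MGF-est}, however, you take a genuinely more direct route. You apply the tail identity $\mathbb E[e^{aY}] = 1 + a\int_0^\infty e^{at}\,\mathbb P(Y>t)\,\mathrm dt$ once, to $Y = \xi\wedge\eta$, factor $\mathbb P(\xi\wedge\eta > t) = e^{-\beta t}\,\mathbb P(\eta>t)$ by independence, and perform a single split-at-$t_0$ integral. The paper instead conditions on $\xi = u$: it first bounds $\mathbb E[e^{a(u\wedge\eta)}]$ by integration by parts against the distribution of $\eta$, obtaining a two-case formula according as $u \le t_0$ or $u > t_0$, and only then integrates against $\beta e^{-\beta u}\,\mathrm du$. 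Both land exactly on $\theta(\alpha,\beta,\gamma,a)$, but your one-integral version avoids the intermediate casework and makes the role of the hypothesis $a < \beta+\gamma$ (convergence of the tail integral) immediately visible; the paper's iterated computation is longer but has the minor byproduct of an explicit bound on $\mathbb E[e^{a(u\wedge\eta)}]$ for deterministic $u$.
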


\begin{proof} Let us first show~\eqref{eq:probab}. We have  $\alpha e^{-\gamma u} < 1$ for $u > u_0 := \gamma^{-1}\ln(\alpha)$. Then we can rewrite our tail estimate for $\eta$ as follows:
$$
\mathbb P(\eta \ge u) \le 
\begin{cases}
\alpha e^{-\gamma u},\,& u \ge u_0,\\
1,\, & u< u_0. 
\end{cases}
$$
Therefore, we have 
\begin{align*}
\mathbb P(\xi \le \eta) & = \int_0^{\infty}\beta e^{-\beta u}\mathbb P(u \le \eta)\,\mathrm{d}u  \\ & \le \int_{u_0}^{\infty}\alpha\beta e^{-\beta u}e^{-\gamma u}\,\mathrm{d}u + \int_0^{u_0}\beta e^{-\beta u}\,\mathrm{d}u \\ & = \frac{\alpha\beta}{\beta + \gamma}e^{-(\beta + \gamma)u_0} + (1 - e^{-\beta u_0}) 
 = 1 - \frac{\gamma}{\beta + \gamma}\alpha^{-\beta/\gamma}. 
\end{align*}
From here~\eqref{eq:probab} immediately follows. Next, let us show~\eqref{eq:MGF-est}. For every $u \ge 0$, 
\begin{align*}
\mathbb E&\left[e^{a(u\wedge\eta)}\right] = e^{au}\,\mathbb P(\eta > u) + \int_0^ue^{av}\,\mathbb P(\eta \in \mathrm{d}v) \\ & \le  e^{au}\,\mathbb P(\eta > u)  - \int_0^ue^{av}\,\mathrm{d}\mathbb P(\eta > v) \\ & = e^{au}\,\mathbb P(\eta > u)  - \left.e^{av}\,\mathbb P(\eta > v)\right|_{v=0}^{v=u} + \int_0^u\mathbb P(\eta > v)\,\mathrm{d} e^{av} \\ & \le  1 + \int_0^u(\alpha e^{-\gamma v}\wedge 1)\,ae^{av}\,\mathrm{d}v.
\end{align*}
Calculate the integral in the right-hand side by splitting it into two integrals: from $0$ to $u_0$ (where $u_0$ is defined above), and from $u_0$ to $u$. If $u \in [0, u_0]$,  this integral is equal to 
$$
\int_0^u(\alpha e^{-\gamma v}\wedge 1)\,ae^{av}\,\mathrm{d}v = \int_0^u\,ae^{av}\,\mathrm{d}v = e^{au} - 1.
$$
If $u > u_0$, then this integral is equal to 
\begin{align*}
\int_0^u&(\alpha e^{-\gamma v}\wedge 1)\,ae^{av}\,\mathrm{d}v = \int_0^{u_0}ae^{av}\,\mathrm{d}v + \int_{u_0}^u\alpha ae^{(a -\gamma)v}\,\mathrm{d}v \\ & = e^{au_0} - 1 + \frac{\alpha a}{a - \gamma}\left[e^{(a - \gamma)u} - e^{(a - \gamma)u_0}\right]  = \alpha^{a/\gamma} + \frac{\alpha a}{a - \gamma}\left[e^{(a - \gamma)u} - \alpha^{(a - \gamma)/\gamma}\right]  - 1 \\ & = 
\frac{\gamma}{\gamma - a}\alpha^{a/\gamma} + \frac{\alpha a}{a - \gamma}e^{(a - \gamma)u} - 1.
\end{align*}
Combining all these computations, we get
\begin{equation}
\label{eq:fixed-xi}
\mathbb E\left[e^{a(u\wedge\eta)}\right]  \le 
\begin{cases}
\frac{\gamma}{\gamma - a}\alpha^{a/\gamma} + \frac{\alpha a}{a - \gamma}e^{(a - \gamma)u},\, u > u_0;\\
e^{au},\, u \in [0, u_0].
\end{cases}
\end{equation}
Now integrate~\eqref{eq:fixed-xi} with respect to the exponential distribution of $\xi$: 
$\beta e^{-\beta u}\,\mathrm{d}u$:
\begin{align*}
\mathbb E&\left[ e^{a(\xi\wedge\eta)}\right] \le \int_0^{u_0}e^{au}\,\beta e^{-\beta u}\,\mathrm{d} u + \int_{u_0}^{\infty}\left[\frac{\gamma}{\gamma - a}\alpha^{a/\gamma} + \frac{\alpha a}{a - \gamma}e^{(a - \gamma)u}\right]\,\beta e^{-\beta u}\,\mathrm{d} u \\ & = \frac{\beta}{a - \beta}\left[e^{(a - \beta)u_0} - 1\right] + \frac{\gamma}{\gamma - a}\alpha^{a/\gamma}e^{-\beta u_0} + \frac{\alpha\beta a}{(a - \gamma)(\gamma + \beta - a)}e^{-(\gamma+\beta-a)u_0} \\ & = 
\frac{a(\beta - \gamma)}{(a - \beta)(a - \gamma)}\alpha^{(a - \beta)/\gamma} - \frac{\beta}{a - \beta} + \frac{\beta }{(a - \gamma)(\beta + \gamma - a)}\alpha^{1-(\gamma+\beta-a)/\gamma} \\ & 
= \frac{a\gamma}{(a-\beta)(\beta+\gamma-a)} \alpha^{(a - \beta)/\gamma} +  \frac{\beta}{\beta - a}.
\end{align*}
This completes the proof. 
\end{proof}

\section{Concluding Remarks} \label{sec-conclusion}

We have found the explicit invariant measure for the joint interactive queueing and environment process, and estimated the exponential rate of convergence for the compact environment case. 
One interesting question would be to consider unbounded environment domains, but with environment process being exponentially ergodic. This will require much finer estimates, because Assumption~\ref{asmp:basic} will hold only with $\alpha$ dependent on $z_1$ and $z_2$. One way to find such coupling was developed in \cite{Genius, MyOwn16, LMT1996, MyOwn12} via Lyapunov functions. 
Subgeometric rates of convergence seem interesting. Some work was done in \cite{Douc, Sub} for general Markov processes and in \cite{APS19, AHPS19} for some SDEs arising from many-server queues; but to the best of our knowledge none for our setup.

\section*{Acknowledgments}
G. Pang was supported in part by NSF grants CMMI-1635410, and DMS/CMMI-1715875 and in part by an Army Research Office grant W911NF-17-1-0019. 
Y. Belopolskaya was supported in part by RSF 17-11-01136. Y. Suhov thanks Department of Mathematics at Pennsylvania State university for
hospitality and support.

\end{document}